\documentclass[12pt]{article}
\usepackage{amssymb}
\usepackage{amsmath}
\usepackage{amsthm}
\usepackage{color}
\usepackage{enumitem}
\usepackage{geometry}
\usepackage{graphicx}
\usepackage[sort&compress]{natbib}     % <-- new

\makeatletter
	\@addtoreset{equation}{section}
\makeatother

\setlist[enumerate]{itemsep=0pt}

\let\originalleft\left
\let\originalright\right
\renewcommand{\left}{\mathopen{}\mathclose\bgroup\originalleft}
\renewcommand{\right}{\aftergroup\egroup\originalright}

\begin{document}

\newcommand{\bO}{{\bf 0}}
\newcommand\cF{\mathcal{F}}
\newcommand\cL{\mathcal{L}}
\newcommand\cM{\mathcal{M}}
\newcommand\cN{\mathcal{N}}
\newcommand{\rD}{{\rm D}}
\newcommand{\re}{{\rm e}}
\newcommand{\ri}{{\rm i}}
\newcommand{\ee}{\varepsilon}

\newtheorem{theorem}{Theorem}[section]
\newtheorem{corollary}[theorem]{Corollary}
\newtheorem{lemma}[theorem]{Lemma}
\newtheorem{proposition}[theorem]{Proposition}

\theoremstyle{definition}
\newtheorem{definition}{Definition}[section]
\newtheorem{example}[definition]{Example}

\theoremstyle{remark}
\newtheorem{remark}{Remark}[section]

% keywords: piecewise-smooth; bifurcation; dynamics; piecewise-linear

% MSC codes:
%		34A36 -- Discontinuous Equations
%		34D20 -- Stability of solutions to ordinary differential equations

\title{
The stability of boundary equilibria of three-dimensional Filippov systems.
}
\author{
D.J.W.~Simpson\\\\
School of Mathematical and Computational Sciences\\
Massey University\\
Palmerston North, 4410\\
New Zealand
}
\maketitle

\begin{abstract}

For three-dimensional piecewise-smooth systems of ordinary differential equations, this paper characterises the stability of points that belong to a switching surface and are equilibria of exactly one of the two neighbouring pieces of the system. Stability is challenging to characterise when nearby orbits repeatedly switch between regular motion on one side of the switching surface, and sliding motion on the switching surface, as defined via Filippov's convention. We prove that in this case stability is governed by the behaviour of a global reinjection mechanism of a four-parameter family of piecewise-linear hybrid systems, and perform a detailed numerical study of this family.

\end{abstract}

%===============================================================================
\section{Introduction}
\label{sec:intro}

Dynamical systems with multiple modes of evolution are naturally modelled by piecewise-smooth differential equations.
Each piece of the system applies in some region of phase space,
with a boundary comprised of one or more codimension-one {\em switching surfaces}.
In some models, the system state is permitted to {\em slide} along a switching surface.
For example, in models of relay control and threshold control systems,
sliding motion represents the idealised limit that the reaction time of the controller is zero \cite{JoRa99,TaLi12}.
In mechanical models for which the friction between in-contact objects is modelled by Coulomb's law,
sliding motion corresponds to the objects being stuck together \cite{CaGi06,DiKo03,Sh86}.

Mathematically, sliding motion can be defined by averaging the neighbouring smooth pieces of the differential equations.
In this paper, we use the standard Filippov framework \cite{Fi88} for defining solutions in a way accommodates sliding motion.

If an equilibrium of a Filippov system does not belong to a switching surface,
then, except in special cases, % e.g.~at bifurcations,
the stability of the equilibrium is governed by the eigenvalues of the Jacobian matrix evaluated at the equilibrium \cite{Me07}.
For equilibria on switching surfaces, stability is more complicated.
This problem has been heavily studied for equilibria that are zeros of each neighbouring piece of the differential equations
due to their ubiquity in control applications \cite{Jo03,Li03,LiAn09}.
The standard approach is to employ algorithm to search for a Lyapunov function whose existence implies stability.
However, these algorithms rarely characterise stability, as their failure to produce a Lyapunov function usually
does not imply that the equilibrium is unstable.
Extra considerations are needed to accommodate sliding motion \cite{DeRo14,IeTr20,Su10},
and alternate techniques have been developed to handle equilibria
that lie at the intersection of multiple switching surfaces \cite{AkAr09,IwHa06,XuHu10}.
See also \cite{VaLe04} for the stability of a continuum of equilibria on a switching surface.

This paper treats points on isolated switching surfaces that are equilibria of exactly one neighbouring piece of the system.
Such points occur at boundary equilibrium bifurcations \cite{DiBu08}
where an equilibrium of one piece of the system collides with a switching surface as parameters are varied.
Such bifurcations can act as tipping points by destroying a local attractor \cite{Si25e}.
However, this cannot occur if the equilibrium at the bifurcation is asymptotically stable \cite{DiNo08},
and this motivates the present work.

As an example, Fig.~\ref{fig:A} shows a phase portrait of a three-dimensional system
\begin{equation}
\dot{x} = \begin{cases}
f^L(x), & H(x) < 0, \\
f^R(x), & H(x) > 0,
\end{cases}
\label{eq:f}
\end{equation}
with a boundary equilibrium $x^*$ (throughout this paper dots denote differentiation with respect to time, $t$).
Locally, the switching surface $\Sigma$ (where $H(x) = 0$) is divided into an {\em attracting sliding region},
where the vector fields $f^L$ and $f^R$ are both directed toward $\Sigma$, and a {\em crossing region},
where $f^L$ and $f^R$ have the same direction relative to $\Sigma$.
The equilibrium $x^*$ lies on the boundary $\Gamma \subset \Sigma$ of these regions.
Nearby orbits switch from regular motion to sliding motion whenever they reach $\Sigma$,
and from sliding motion to regular motion whenever they reach $\Gamma$.

%%%%%%%%%%%%%%%%%%%%%%%%%%%%%%%%%%%%%%%%%%%%%%%%%%%%%%%%%%%%%%%%%%%%%%%%%%%%%%%%%%%%%%%%%%%%%%%%%%%%
\begin{figure}[b!]
\begin{center}
\includegraphics[width=8.8cm]{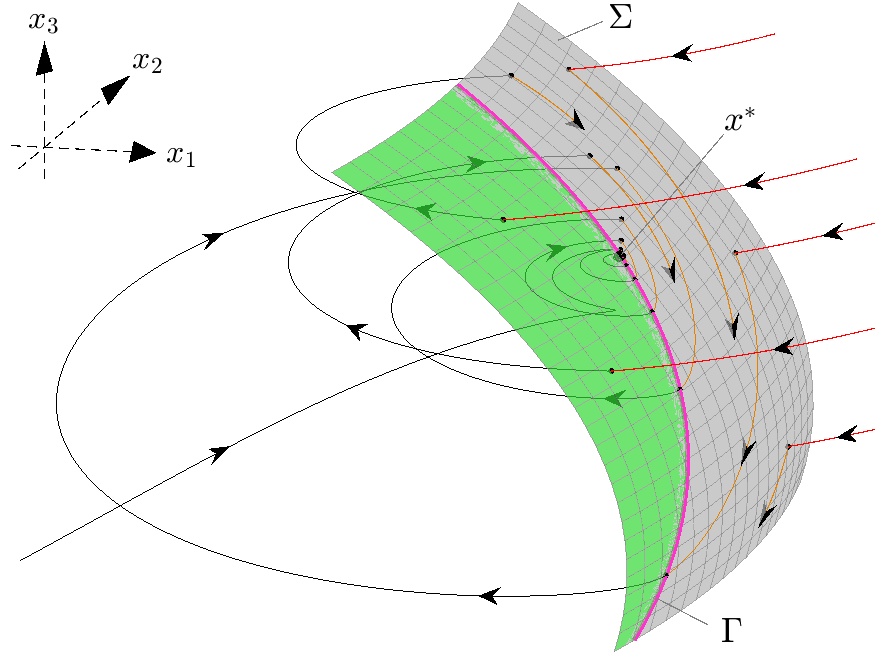}
\caption{
A phase portrait of a three-dimensional Filippov system of the form \eqref{eq:f}.
The switching surface $\Sigma$ (where $H(x) = 0$) consists of an attracting sliding region (grey),
a crossing region (green), and the boundary $\Gamma$ between these regions (pink).
Orbits are coloured black where they evolve under $f^L$,
red where they evolve under $f^R$,
and orange where they evolve under the sliding vector field $f^S$.
The boundary equilibrium $x^* \in \Gamma$ obeys $f^L(x^*) = \bO$ and $f^R(x^*) \ne \bO$.
Below $x^*$, points on $\Gamma$ are visible folds (see \S\ref{sub:pointsOfTangency});
above $x^*$, points on $\Gamma$ are invisible folds.
\label{fig:A}
} 
\end{center}
\end{figure}
%%%%%%%%%%%%%%%%%%%%%%%%%%%%%%%%%%%%%%%%%%%%%%%%%%%%%%%%%%%%%%%%%%%%%%%%%%%%%%%%%%%%%%%%%%%%%%%%%%%%

In two dimensions the stability of boundary equilibria in generic situations
is easy to ascertain because sliding motion is one-dimensional
and nearby orbits cannot repeatedly switch between regular motion and sliding motion \cite{HoHo16}.
To handle three dimensions we leverage earlier work \cite{Si18d,Si21}
to justify replacing $f^L$ and the sliding vector field with their linearisations.
We also convert $\rD f^L$ to a companion matrix via a change of coordinates.
These considerations lead us to the piecewise-linear form
\begin{equation}
\dot{y} = \begin{cases}
g^L(y), & \text{until $y_1 = 0$}, \\
g^S(y), & \text{until $y_2 = 0$},
\end{cases}
\label{eq:pwl}
\end{equation}
where $y = (y_1,y_2,y_3)$, and
\begin{align}
g^L(y) &= \begin{bmatrix} a-1 & 1 & 0 \\ a-b & 0 & 1 \\ -b & 0 & 0 \end{bmatrix} y, &
g^S(y) &= \begin{bmatrix} 0 & 0 & 0 \\ 0 & c & 1 \\ 0 & -d & 0 \end{bmatrix} y,
\label{eq:gLgS}
\end{align}
where $a, b, c, d \in \mathbb{R}$ are parameters.
Notice \eqref{eq:pwl} is a hybrid system, not a Filippov system.
Orbits switch from following $g^L$ to following $g^S$ when they reach the coordinate plane $y_1 = 0$, denoted $\tilde{\Sigma}$,
and switch from following $g^S$ to following $g^L$ when they reach the line $y_1 = y_2 = 0$, denoted $\tilde{\Gamma}$.
If orbits repeatedly visit $\tilde{\Gamma}$, as in Fig.~\ref{fig:B},
then the map $\zeta$ of first return to $\tilde{\Gamma}$ is well-defined.
Moreover, $\zeta$ is linear and forward orbits can only reach points $(0,0,z) \in \tilde{\Gamma}$ for which $z < 0$.
Thus $\zeta(z) = \Lambda z$, where
\begin{equation}
\Lambda = -\zeta(-1),
\label{eq:Lambda}
\end{equation}
and this value governs the stability of the boundary equilibrium.
A similar use of maps to characterise stability is described by Eldem and \"{O}ner \cite{ElOn15}
when $x^*$ is an equilibrium of both pieces of the system,
and by Gon\c{c}alves {\em et al.}~\cite{GoMe03} for a general class of control systems.

%%%%%%%%%%%%%%%%%%%%%%%%%%%%%%%%%%%%%%%%%%%%%%%%%%%%%%%%%%%%%%%%%%%%%%%%%%%%%%%%%%%%%%%%%%%%%%%%%%%%
\begin{figure}[b!]
\begin{center}
\includegraphics[width=8.8cm]{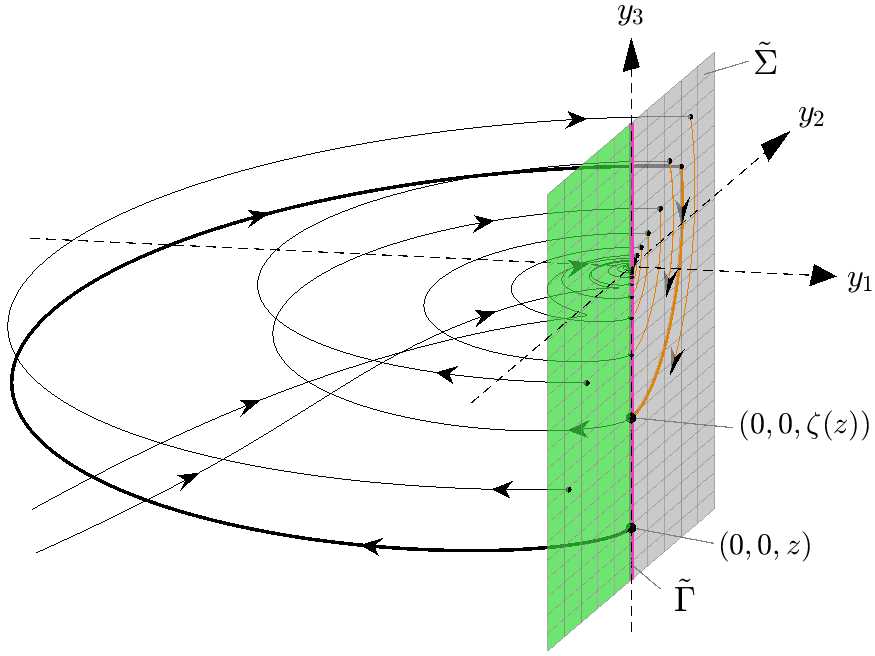}
\caption{
A phase portrait of the piecewise-linear system \eqref{eq:pwl}--\eqref{eq:gLgS} with $(a,b,c,d) = (-0.2,5,-0.2,3)$.
Orbits evolve under $g^L$ and are coloured black until reaching $\tilde{\Sigma}$ (where $y_1 = 0$).
Here they switch to evolution under $g^S$ and are coloured orange until reaching $\tilde{\Gamma}$ (where $y_1 = y_2 = 0$).
Given $z < 0$, we write $(0,0,\zeta(z))$ for the next point at which the forward orbit of $(0,0,z)$ intersects $\tilde{\Gamma}$.
\label{fig:B}
} 
\end{center}
\end{figure}
%%%%%%%%%%%%%%%%%%%%%%%%%%%%%%%%%%%%%%%%%%%%%%%%%%%%%%%%%%%%%%%%%%%%%%%%%%%%%%%%%%%%%%%%%%%%%%%%%%%%

The remainder of this paper is organised as follows.
In \S\ref{sec:main} we state the main result (Theorem \ref{th:main}) for a boundary equilibrium $x^*$
of a general three-dimensional Filippov system of the form \eqref{eq:f}.
This result states that in cases for which orbits near $x^*$ repeatedly visit $\Gamma$,
the stability of $x^*$ is governed by the sign of $\ln(\Lambda)$ for the corresponding hybrid system \eqref{eq:pwl}--\eqref{eq:gLgS},
while in other cases the stability is governed by the eigenvalues of $\rD f^L(x^*)$ and $\rD f^S(x^*)$, where $f^S$ is the sliding vector field.

In \S\ref{sec:prelim} we clarify some terminology for Filippov systems,
then in \S\ref{sec:red} perform the reduction from \eqref{eq:f} to \eqref{eq:pwl}--\eqref{eq:gLgS}.
Section \ref{sec:proof} contains a proof of Theorem \ref{th:main},
then \S\ref{sec:numerics} presents a numerical exploration of how the value of $\Lambda$
varies with the values of $a$, $b$, $c$, and $d$.
Notably, $x^*$ can be unstable when all eigenvalues of $\rD f^L(x^*)$ and $\rD f^S(x^*)$ have negative real-part,
except the trivial zero eigenvalue of $\rD f^S(x^*)$.
Further, $x^*$ can be asymptotically stable when $\rD f^L(x^*)$ and $\rD f^S(x^*)$ both have eigenvalues with positive real-part.
Concluding remarks are provided in \S\ref{sec:conc}.

%===============================================================================
\section{Main result}
\label{sec:main}

Consider a Filippov system \eqref{eq:f}, where $f^L, f^R : \mathbb{R}^3 \to \mathbb{R}^3$ are $C^1$
and $H : \mathbb{R}^3 \to \mathbb{R}$ is $C^2$.
The switching surface is the set
\begin{equation}
\Sigma = \left\{ x \in \mathbb{R}^3 \,\middle|\, H(x) = 0 \right\}.
\label{eq:Sigma}
\end{equation}
Suppose $x^* \in \Sigma$ is such that $f^L(x^*) = \bO$, and define
\begin{align}
p &= \nabla H(x^*), \label{eq:p} \\
q &= f^R(x^*), \label{eq:q} \\
A &= \rD f^L(x^*). \label{eq:A} \\
B &= \left( I - \frac{q p^{\sf T}}{p^{\sf T} q} \right) A, \label{eq:B}
\end{align}
which in \eqref{eq:B} assumes $p^{\sf T} q \ne 0$
(meaning that $f^R$ is not tangent to $\Sigma$ at $x^*$).
The vector $p$ is a normal vector for $\Sigma$ at $x^*$,
the matrix $A$ is the Jacobian matrix of $f^L$ at $x^*$,
and the matrix $B$ is the Jacobian matrix of the sliding vector field $f^S$ (defined in \S\ref{sub:fS}) at $x^*$.
Notice $p^{\sf T} B = \bO^{\sf T}$, so $0$ is an eigenvalue of $B$.
Also define the {\em observability matrix}
\begin{equation}
\Phi = \begin{bmatrix} p^{\sf T} \\ p^{\sf T} A \\ p^{\sf T} A^2 \end{bmatrix}.
\label{eq:Phi}
\end{equation}

%...............................................................................
\begin{theorem}
Suppose $p^{\sf T} q < 0$ and $\det(\Phi) \ne 0$.
\begin{enumerate}
\item
If $A$ or $B$ has a positive eigenvalue, then $x^*$ is unstable.
\item
If $A$ has three distinct negative eigenvalues,
and the non-zero eigenvalues of $B$ are either complex or both negative,
then $x^*$ is asymptotically stable.
\item
Suppose $A$ has eigenvalues $\alpha \pm \ri \beta$ and $-\gamma$,
where $\alpha \in \mathbb{R}$, $\beta > 0$, and $\gamma > 0$,
and the non-zero eigenvalues $\lambda^S_1$ and $\lambda^S_2$ of $B$ are either complex or both negative.
Let
\begin{equation}
\begin{aligned}
a &= \frac{2 \alpha}{\gamma}, & \qquad \qquad
c &= \frac{\lambda^S_1 + \lambda^S_2}{\gamma}, \\
b &= \frac{\alpha^2 + \beta^2}{\gamma^2}, & \qquad \qquad
d &= \frac{\lambda^S_1 \lambda^S_2}{\gamma^2},
\end{aligned}
\label{eq:pwlParams}
\end{equation}
and let $\Lambda$ be given by \eqref{eq:Lambda}
for the piecewise-linear system \eqref{eq:pwl}--\eqref{eq:gLgS}.
If $\Lambda$ is undefined or $\Lambda < 1$ then $x^*$ is asymptotically stable,
while if $\Lambda > 1$ then $x^*$ is unstable.
\end{enumerate}
\label{th:main}
\end{theorem}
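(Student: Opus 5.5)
The plan has three stages. First, reduce to a normal form using the conditions $p^{\sf T} q<0$ and $\det(\Phi)\ne0$. Second, justify that linearisation preserves the stability type. Third, analyse the piecewise-linear model case by case.

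\textbf{Reduction.} Since $p^{\sf T} q<0$, $f^R$ points toward $\Sigma$ near $x^*$. Orbits near $x^*$ in $H>0$ therefore reach $\Sigma$ in short time, and it suffices to understand orbits in $H\le 0$ together with sliding motion. Because $\det(\Phi)\ne0$, the map $x\mapsto \Phi(x-x^*)$ is a valid linear change of coordinates. I would use it, followed by a scaling of time by $\gamma$ and of space, to put $A$ into the observer companion form appearing in $g^L$. In these coordinates $y_1$ corresponds to $H$ to first order, and $y_2=p^{\sf T}Ax$ is the first-order tangency function of $f^L$. Hence $\Gamma$ (the fold curve of $f^L$) is approximately $y_1=y_2=0$. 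The sliding Jacobian $B$ restricted to $\Sigma$ becomes the $2\times2$ block of $g^S$ in \eqref{eq:gLgS}; matching traces and determinants gives $c$ and $d$ in \eqref{eq:pwlParams}. For item 3 the characteristic polynomial of $A/\gamma$ is $(\lambda+1)(\lambda^2-a\lambda+b)$, which matches the companion matrix in \eqref{eq:gLgS}.

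\textbf{Linearisation.} I would invoke the earlier results \cite{Si18d,Si21}, as the paper indicates, to replace $f^L$, $f^S$ and the switching manifolds by their linearisations. The justification is that the resulting dynamics are invariant under positive scaling, so the return map to $\Gamma$ is the linear map $z\mapsto\Lambda z$ plus $o(|z|)$ corrections. Consequently, $\Lambda<1$ forces contraction and $\Lambda>1$ forces expansion along successive returns. I would also bound excursions between returns by a constant multiple of $|z|$, which upgrades contraction of the return map to Lyapunov stability plus attraction. This step, controlling the nonlinear terms uniformly near the nonsmooth point $x^*$, where $f^S$ fails to be differentiable across $\Gamma$ in the usual sense, is the main obstacle. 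I would handle it by blowing up: rescale $x-x^*=r\,u$ and show that the rescaled flow converges in $C^0$, uniformly on compact time intervals, to the piecewise-linear hybrid flow as $r\to0$.

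\textbf{Case analysis on the linear model.}
\begin{enumerate}
\item If $A$ has a positive eigenvalue, consider its eigenvector. For the companion form, a real eigenvector has $y_1\ne0$, so either it or its negative lies in $y_1<0$, giving an invariant ray that escapes. If instead $B$ has a positive eigenvalue, its eigenvector lies in $\Sigma$, and I would show the corresponding ray lies in the sliding region (or its negative does). Either way an escaping orbit exists, so $x^*$ is unstable.
\item If $A$ has three distinct negative eigenvalues and $B$ is as stated, I would argue that orbits in $y_1<0$ can cross $y_1=0$ only boundedly often. The argument is that $y_1(t)$ is a combination of three real exponentials, so it has at most two zeros. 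Sliding orbits then approach $x^*$ along $B$'s stable dynamics: a complex pair gives spiralling that may exit through $\Gamma$, while a negative pair gives direct approach. In either case I would show that any return to $\Gamma$ lands closer to $x^*$ with no reinjection cycle, or that returns cannot recur. Putting these together gives asymptotic stability.
\item In the complex case the return map to $\tilde\Gamma$ is $z\mapsto\Lambda z$. If $\Lambda<1$, iterates decay geometrically and, by the linearisation step, $x^*$ is asymptotically stable. If $\Lambda>1$ they grow, so $x^*$ is unstable. If $\Lambda$ is undefined, then an orbit from $\tilde\Gamma$ never returns. In that situation it must converge to $x^*$ under $g^L$ (because $-\gamma<0$ and $y_1$ stays negative along the stable manifold) or under $g^S$ (because $B$ is stable on $\Sigma$), and compactness of the unit sphere of initial directions makes this convergence uniform, which again gives asymptotic stability.
\end{enumerate}
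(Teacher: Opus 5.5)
Your proposal has a genuine gap in case (ii), plus two smaller ones in cases (i) and (iii).

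\textbf{Case (ii).} Saying that $y_1(t)$, a sum of three real exponentials, has at most two zeros only limits how often a single $g^L$-excursion meets $y_1=0$. In the hybrid system each excursion stops at its first hit anyway. What must be ruled out is an infinite chain of reinjection cycles $\tilde\Gamma\to\tilde\Sigma\to\tilde\Gamma\to\cdots$ with growing amplitude. Your ``any return to $\Gamma$ lands closer to $x^*$ \dots\ or returns cannot recur'' is exactly the claim that needs proof, and you give no argument for either alternative.

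Your remark that a negative pair for $B$ gives `direct approach' is also wrong. For a stable node, sliding orbits with $y_3$ sufficiently negative still reach $\tilde\Gamma$, so exits occur in both subcases.

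The paper's key fact is Lemma \ref{le:threeNegativeEigs}: the $g^L$-orbit of any $(0,0,z)$ with $z<0$ satisfies $y_1(t)<0$ for all $t>0$. So after one exit an orbit never slides again and converges. Lemma \ref{le:asyStab} and Propositions \ref{pr:normalForm} and \ref{pr:hots} then finish the argument.

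Your own tool yields this lemma once zeros are counted \emph{with multiplicity}. A nontrivial $\sum_{i=1}^3 k_i\re^{\lambda_i t}$ with distinct real $\lambda_i$ has at most two real zeros counting multiplicity: multiply by $\re^{-\lambda_3 t}$, differentiate, and apply Rolle. At a point of $\tilde\Gamma$ one has $y_1(0)=\dot y_1(0)=0$ and $\ddot y_1(0)=y_3(0)<0$. So $t=0$ is a double zero that exhausts the count, and $y_1<0$ for all $t>0$. This is shorter than the paper's eigenvector expansion and convexity argument. Without multiplicity, though, the bound still permits a further zero $t_1>0$, i.e.\ a return, and proves nothing.

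\textbf{Case (i).} You only exhibit an escaping ray for the linear model, and nothing in your linearisation step carries \emph{instability} back to \eqref{eq:f}. Proposition \ref{pr:hots} (from \cite{Si21}) transfers only asymptotic stability. $C^0$ closeness of the blown-up flow on compact time intervals does not by itself give an escaping orbit along an eigen-ray, because nearby directions can drift off the ray (e.g.\ when the positive eigenvalue of $A$ is not the one of largest real part). In case (iii) the blow-up works only because each return to the one-dimensional set $\tilde\Gamma$ renormalises the direction.

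The paper instead argues directly with the $C^1$ fields. $f^L$ (resp.\ $f^S$) has an orbit emanating from $x^*$ tangent to the eigenvector. It is a Filippov solution because $p^{\sf T}v\ne0$ puts it in $\Omega_L$, resp.\ because $p^{\sf T}w=0$ and $p^{\sf T}Aw\ne0$ put it in the attracting sliding region; both facts follow from Remark \ref{re:pbh}.

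\textbf{Case (iii), $\Lambda$ undefined.} `$y_1$ stays negative along the stable manifold' does not explain why a $g^L$-orbit that never reaches $\tilde\Sigma$ must converge. The paper's reason is that non-convergent orbits approach the invariant plane of the eigenvalues $\frac{\alpha\pm\ri\beta}{\gamma}$. That plane is transverse to $\tilde\Sigma$, so such orbits hit $\tilde\Sigma$ with $y_2>0$.

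The rest of your case (iii), including the blow-up for $\Lambda>1$, follows the paper.
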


%...............................................................................
\begin{remark}
The condition $\det(\Phi) \ne 0$ is necessary for the genericity of the tangency curve $\Gamma$ (see \S\ref{sec:prelim}).
This condition also ensures that the companion matrix $\rD g^L$ in \eqref{eq:gLgS} can realised via a change of coordinates (see Proposition \ref{pr:normalForm}).
By the Popov-Belevitch-Hautus observability test \cite{So98},
$\det(\Phi) \ne 0$ is equivalent to the statement that $A$ has no eigenvector orthogonal to $p$.
\label{re:pbh}
\end{remark}

%...............................................................................
\begin{remark}
The condition $p^{\sf T} q < 0$ ensures that $f^R$ is directed toward $\Sigma$ in a neighbourhood of $x^*$.
If $p^{\sf T} q > 0$ then $f^R$ is directed away from $\Sigma$, and $x^*$ is unstable.
\label{re:pTq}
\end{remark}

%===============================================================================
\section{Preliminaries}
\label{sec:prelim}

In this section we review fundamental aspects of Filippov systems relevant to
systems satisfying the conditions of Theorem \ref{th:main}.
For more details refer to Filippov \cite{Fi88} or the books \cite{DiBu08,Je18b}.

%-------------------------------------------------------------------------------
\subsection{Sliding and crossing regions}

Consider a three-dimensional Filippov system \eqref{eq:f}
for which $f^L$ and $f^R$ are $C^1$ and $H$ is $C^2$.
If $p \ne \bO$, where $p$ is the gradient vector of $H$ at $x^*$ \eqref{eq:p},
then, in a neighbourhood of $x^*$, the switching surface $\Sigma$ \eqref{eq:Sigma}
is $C^2$ by the regular value theorem \cite{Hi76}.
Let
\begin{align}
\Omega_L &= \left\{ x \in \mathbb{R}^3 \,\middle|\, H(x) < 0 \right\}, \\
\Omega_R &= \left\{ x \in \mathbb{R}^3 \,\middle|\, H(x) > 0 \right\},
\end{align}
be the left and right subdomains.
Also let 
\begin{align}
v_L(x) &= \nabla H(x)^{\sf T} f^L(x), &
v_R(x) &= \nabla H(x)^{\sf T} f^R(x),
\end{align}
denote the rate of change of $H$ along orbits following $f^L$ and $f^R$.

%...............................................................................
\begin{definition}
A subset $U \subset \Sigma$ is
\begin{enumerate}
\item
a {\em crossing region} if $v_L(x) v_R(x) > 0$ for all $x \in U$,
\item
an {\em attracting sliding region} if $v_L(x) > 0$ and $v_R(x) < 0$ for all $x \in U$, and
\item
a {\em repelling sliding region} if $v_L(x) < 0$ and $v_R(x) > 0$ for all $x \in U$.
\end{enumerate}
\label{df:regions}
\end{definition}

%-------------------------------------------------------------------------------
\subsection{Points of tangency}
\label{sub:pointsOfTangency}

In Theorem \ref{th:main}, $f^L(x^*) = \bO$ and $p^{\sf T} q < 0$, so $v_L(x^*) = 0$ and $v_R(x^*) < 0$.
Moreover, $\Phi$ is invertible, thus the vectors $p^{\sf T}$ and $\nabla v_L(x^*)^{\sf T} = p^{\sf T} A$ are linearly independent.
This implies that
\begin{equation}
\Gamma = \left\{ x \in \Sigma \,\middle|\, v_L(x) = 0 \right\},
\label{eq:}
\end{equation}
is a $C^1$ curve in a neighbourhood of $x^*$ by the regular value theorem \cite{Hi76}.
This curve consists of points where $f^L$ is tangent to $\Sigma$,
and splits $\Sigma$ into an attracting sliding region and a crossing region, see Fig.~\ref{fig:A}.
To classify points on $\Gamma$, let
\begin{equation}
w_L(x) = \nabla v_L(x)^{\sf T} f^L(x),
\label{eq:aL}
\end{equation}
denote the second derivative of $H$ with respect to $t$ for orbits following $f^L$.
Notice $w_L(x^*) = 0$ and $\nabla w_L(x^*)^{\sf T} = p^{\sf T} A^2$.
Since $\Phi$ is invertible,
on $\Gamma$ we have $w_L(x) < 0$ on one side of $x^*$
and $w_L(x) > 0$ on the other side of $x^*$.
At points on $\Gamma$ with $w_L(x) < 0$,
the forward orbit of $x$ under $f^L$ immediately enters $\Omega_L$ and is a Filippov solution of \eqref{eq:f}.
Such points are termed {\em visible folds}.
If instead $w_L(x) > 0$, $x$ is termed an {\em invisible fold}.

%-------------------------------------------------------------------------------
\subsection{Filippov solutions, semi-flows, and the stability of equilibria}

A {\em Filippov solution} to \eqref{eq:f} is an absolutely continuous function $\phi(t)$
for which $\dot{\phi}(t) \in \cF(\phi(t))$ for almost all $t$, where
\begin{equation}
\cF(x) = \begin{cases}
\{ f^L(x) \}, & H(x) < 0, \\
\left\{ (1-\theta) f^L(x) + \theta f^R(x) \,\middle|\, 0 \le \theta \le 1 \right\}, & H(x) = 0, \\
\{ f^R(x) \}, & H(x) > 0.
\end{cases}
\label{eq:cF}
\end{equation}
By Theorem 2 of Filippov \cite[\S 10]{Fi88},
Filippov solutions of \eqref{eq:f} exist and are unique forwards in time
throughout regions of phase space for which either $v_L(x) > 0$ or $v_R(x) < 0$ at any point in $\Sigma$.
For \eqref{eq:f} satisfying the conditions of Theorem \ref{th:main},
we have $v_R(x^*) = p^{\sf T} q < 0$, which implies $v_R(x) < 0$ in a neighbourhood $\cN$ of $x^*$.
Thus \eqref{eq:f} has forward uniqueness throughout $\cN$.

When forward uniqueness holds, the collection of all solutions is a semi-flow.
A {\em semi-flow} is a continuous function $\varphi_t(x)$
that obeys $\varphi_0(x) = x$ (initial condition),
and $\varphi_{t_2} \left( \varphi_{t_1}(x) \right) = \varphi_{t_1 + t_2}(x)$ (group property) for all $x$ and all $t_1, t_2 \ge 0$
for which both sides of the group property are defined.

We now define stability of equilibria in terms of semi-flows.

%...............................................................................
\begin{definition}
An {\em equilibrium} of a semi-flow $\varphi_t(x)$ is
a point $x^*$ for which $\varphi_t(x^*) = x^*$ for all $t \ge 0$, and is
\begin{enumerate}
\item 
{\em Lyapunov stable} if for every neighbourhood $\cN$ of $x^*$
there exists a subneighbourhood $\cM$ with the property
that $\varphi_t(x) \in \cN$ for all $x \in \cM$ and $t \ge 0$,
\item
{\em asymptotically stable} if it is Lyapunov stable and
$\varphi_t(x) \to x^*$ as $t \to \infty$ for all $x$ in a neighbourhood of $x^*$, and
\item
{\em unstable} if it is not Lyapunov stable.
\end{enumerate}
\label{df:stable}
\end{definition}

%-------------------------------------------------------------------------------
\subsection{The sliding vector field}
\label{sub:fS}

The vector $(1-\theta) f^L + \theta f^R$ appearing in \eqref{eq:cF} is a convex combination of $f^L$ and $f^R$.
If $v_L \ne v_R$, then this vector is tangent to $\Sigma$ if and only if $\theta = \frac{v_L}{v_L - v_R}$, in which case
the vector is
\begin{equation}
f^S(x) = \frac{v_L(x) f^R(x) - v_R(x) f^L(x)}{v_L(x) - v_R(x)}.
\label{eq:fS}
\end{equation}
If a Filippov solution is constrained to $\Sigma$,
then it follows \eqref{eq:fS}.
Consequently \eqref{eq:fS} is known as the {\em sliding vector field}.
It is a simple exercise in calculus to show from \eqref{eq:fS} that
\begin{equation}
\rD f^S(x^*) = B,
\label{eq:B2}
\end{equation}
where $B$ is given by \eqref{eq:B}.

At $x \in \Gamma$, we have $v_L(x) = 0$, and so $f^S(x) = f^L(x)$.
Thus $w_L(x)$ is the rate of change of $v_L$ along the orbit following $f^S$.
Hence at visible folds, $f^S$ is directed out of the attracting sliding region,
while at invisible folds, $f^S$ is directed into the attracting sliding region.

%...............................................................................
\begin{remark}
To summarise, for a system \eqref{eq:f} satisfying the conditions of Theorem \ref{th:main},
forward evolution in the sense of Filippov is unique in a neighbourhood of $x^*$.
Orbits follow $f^L$ while in $\Omega_L$,
$f^R$ while in $\Omega_R$,
and slide following $f^S$ while belonging to the attracting sliding subset of $\Sigma$.
Orbits switch from sliding motion to motion in $\Omega_L$ at visible folds.
These are are points in $\Gamma$ on the side of $x^*$ for which $w_L(x) < 0$.
\end{remark}

%===============================================================================
\section{Reductions}
\label{sec:red}

In this section we perform a series of reductions to convert
a general system \eqref{eq:f} satisfying case (iii) of Theorem \ref{th:main}
to the piecewise-linear form \eqref{eq:pwl}--\eqref{eq:gLgS}.
The reductions are achieved in such a way that asymptotic stability of $y = \bO$ for \eqref{eq:pwl}--\eqref{eq:gLgS}
implies asymptotic stability of $x = x^*$ for \eqref{eq:f}.

%-------------------------------------------------------------------------------
\subsection{The leading-order approximation}

We first Taylor expand $H$, $f^L$, and $f^R$ about $x = x^*$.
Since $H(x^*) = 0$ and $\nabla H(x^*) = p$, the leading-order term of $H$ is $p^{\sf T} (x-x^*)$.
Since $f^L(x^*) = \bO$ and $\rD f^L(x^*) = A$, the leading-order term of $f^L$ is $A (x-x^*)$.
Finally $f^R(x^*) = q \ne \bO$, so the leading-order term of $f^R$ is the constant vector $q$.
By replacing $H$, $f^L$, and $f^R$ with their leading-order terms,
and shifting $x^*$ to the origin via the translational change of coordinates $u = x - x^*$,
the system \eqref{eq:f} becomes
\begin{equation}
\dot{u} = \begin{cases}
A u, & p^{\sf T} u < 0, \\
q, & p^{\sf T} u > 0.
\end{cases}
\label{eq:fApprox}
\end{equation}
The following result of \cite{Si21} %(see Theorems 2.1 and 2.2)
shows that the nonlinear terms omitted to produce \eqref{eq:fApprox}
cannot break the asymptotic stability of $x^*$.

%...............................................................................
\begin{proposition}
If $p^{\sf T} q < 0$ and $u = \bO$ is an asymptotically stable equilibrium of \eqref{eq:fApprox},
then $x = x^*$ is an asymptotically stable equilibrium of \eqref{eq:f}.
\label{pr:hots}
\end{proposition}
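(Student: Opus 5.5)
The plan is to show that the linearised system \eqref{eq:fApprox} is a structurally controlled perturbation of \eqref{eq:f}, and to build a Lyapunov-type argument that survives the perturbation. Since this proposition is quoted from \cite{Si21}, I would reproduce the argument there in the following form. First straighten the switching surface: because $H$ is $C^2$ with $\nabla H(x^*) = p \ne \bO$, there is a $C^2$ change of coordinates near $x^*$ that sends $\Sigma$ to the plane $p^{\sf T} u = 0$ and has identity derivative at $x^*$. After this change, $f^L(u) = Au + O(\|u\|^2)$ and $f^R(u) = q + O(\|u\|)$. The full system is then a perturbation of \eqref{eq:fApprox}, with errors of relative size $O(\|u\|)$ in each piece.

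The key structural fact is homogeneity. System \eqref{eq:fApprox} is invariant under the scaling $u \mapsto \lambda u$, $t \mapsto \lambda^{0} t$ on the left piece, and $t \mapsto \lambda t$ on the right piece. The right piece is a constant vector field directed into $\Sigma$ because $p^{\sf T} q < 0$. Orbits therefore spend only time $O(\|u\|)$ in $\Omega_R$ before reaching $\Sigma$, and they then slide or follow $Au$.

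Next, blow up the equilibrium. Write $u = r\theta$ with $\|\theta\| = 1$ and project the dynamics onto the sphere. For \eqref{eq:fApprox}, asymptotic stability of a homogeneous (degree-one, in the appropriate sense) system implies exponential decay. That is, there exist $T > 0$ and $\kappa < 1$ such that every orbit starting on $\|u\| = 1$ in $\overline{\Omega_L} \cup \Sigma$ satisfies $\|u(T)\| \le \kappa$. This follows from compactness of the unit sphere together with continuity of the semi-flow, which holds because we have forward uniqueness, as noted in \S\ref{sec:prelim}. The same property then holds at every scale by homogeneity. Short excursions into $\Omega_R$ are handled separately, since they only cost an $O(\|u\|)$ displacement.

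The final step is a continuous-dependence estimate. For the true system, a point $u_0$ with $\|u_0\| = \varepsilon$ can be rescaled by $v = u/\varepsilon$. The rescaled vector fields differ from those of \eqref{eq:fApprox} by $O(\varepsilon)$, with the switching surface already flat. The delicate part is that the semi-flow of a Filippov system is not a smooth function of its vector field. Orbits switch between crossing, sliding and departure at visible folds, so closeness of orbits over the finite time $T$ needs its own justification. I expect this to be the main obstacle. I would first try to show that, because $v_R < 0$ uniformly, the Filippov semi-flow depends continuously (in the $C^0$ sense on compact time intervals) on $C^0$-small perturbations of $f^L$, $f^R$ and $\Sigma$. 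The route is to use upper semicontinuity of the set-valued map $\cF$ together with uniqueness: any limit of solutions of the perturbed inclusions is a solution of the limit inclusion, hence is the unique one.

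Given this continuity, for $\varepsilon$ small we get $\|u(\varepsilon\text{-scaled } T)\| \le \tfrac{1+\kappa}{2}\,\varepsilon$ uniformly over the sphere of radius $\varepsilon$. Iterating this contraction gives geometric decay to $x^*$. Lyapunov stability comes from a uniform bound on the excursion during each period, which is $O(\varepsilon)$ by the same estimate.
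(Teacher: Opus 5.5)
The paper gives no argument of its own here; it quotes the result from \cite{Si21}, so there is no in-paper proof for yours to match. Your route is a sound self-contained alternative: straighten $\Sigma$, blow up by $v=u/\varepsilon$, get a uniform contraction of \eqref{eq:fApprox} on the unit sphere by compactness, transfer it to the full system by continuous dependence, and iterate. It is essentially the blow-up-and-continuity device the paper uses for the $\Lambda>1$ part of case (iii) of Theorem \ref{th:main}, run in the contracting direction, combined with a compactness argument like that of Lemma \ref{le:asyStab}. The citation buys brevity. Your argument buys self-containment, and since each contraction step takes at most a fixed time, it also gives exponential stability of $x^*$, which the paper only says could be had ``with more effort''.

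What must be tightened is the homogeneity. System \eqref{eq:fApprox} is homogeneous only at the level of orbits, not in time. Under $v=u/\varepsilon$ its right field becomes $q/\varepsilon$, and its sliding field becomes $N(v)/(\varepsilon\,p^{\sf T}Av-p^{\sf T}q)$ with $N(v)=(p^{\sf T}Av)q-(p^{\sf T}q)Av$; this interpolates between $Bv$ and the unit-scale sliding field. So ``the same property holds at every scale'' is false in real time, although you never actually need it. More importantly, ``the rescaled fields differ from \eqref{eq:fApprox} by $O(\varepsilon)$'' holds only after multiplying the right field by $\varepsilon$, i.e.\ reparametrising time in $\Omega_R$ alone, and you must justify that this preserves Filippov orbits. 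Write $F^L_\varepsilon(v)=\varepsilon^{-1}f^L(\varepsilon v)$ and $F^R_\varepsilon(v)=f^R(\varepsilon v)$ in straightened coordinates centred at $x^*$. Every element of $\operatorname{co}\{F^L_\varepsilon,F^R_\varepsilon/\varepsilon\}$ is $c$ times an element of $\operatorname{co}\{F^L_\varepsilon,F^R_\varepsilon\}$ with $c\in[1,1/\varepsilon]$. Hence sliding directions are unchanged and $ds/dt\in[1,1/\varepsilon]$; this is the same kind of reparametrisation the paper uses to remove $1/(y_2+1)$. Your ``$\varepsilon$-scaled $T$'' is then the orbit-dependent true time at which $s=T$, which is at most $T$. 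The paper avoids this issue in its own blow-up by working with the hybrid form, where the right piece is eliminated and the sliding speed normalised, so the $\delta=0$ semi-flow is genuinely linearly homogeneous.

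With this reparametrisation, the step you flag as the main obstacle is not one. Extend $F^L_\varepsilon,F^R_\varepsilon$ to $\varepsilon=0$ by $Av$ and $q$; then the Filippov set-valued map is jointly upper semicontinuous in $(v,\varepsilon)$. The $\varepsilon=0$ system has forward uniqueness because $p^{\sf T}q<0$ everywhere. Filippov's continuous-dependence theorem \cite{Fi88}, plus a contradiction argument over the compact sphere, then gives the uniform closeness you need.

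Three smaller points:
\begin{enumerate}
\item $f^L$ is only $C^1$, so the remainder is $o(\|u\|)$, not $O(\|u\|^2)$. This is harmless, since only $o(1)$ closeness after rescaling is used.
\item A common $T$ on the sphere uses Lyapunov stability of \eqref{eq:fApprox}, not just compactness and continuity.
\item Convergence from the whole unit sphere uses orbit-level homogeneity, since asymptotic stability is only local.
\end{enumerate}
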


%-------------------------------------------------------------------------------
\subsection{The three-dimensional boundary equilibrium normal form}

Since $0$ is an eigenvalue of $B$,
the characteristic polynomials of $A$ and $B$ can be written as
\begin{align}
\det(\lambda I - A) &= \lambda^3 - \tau_L \lambda^2 + \sigma_L \lambda - \delta_L \,, \\
\det(\lambda I - B) &= \lambda^3 - \tau_S \lambda^2 - \delta_S \lambda,
\end{align}
where $\tau_L, \sigma_L, \delta_L, \tau_S, \delta_S \in \mathbb{R}$.
The following result is a consequence of Theorem 7 of \cite{Si18d}
and gives conditions under which \eqref{eq:fApprox} can be converted to
\begin{equation}
\dot{y} = \begin{cases}
\begin{bmatrix} \tau_L y_1 + y_2 \\ -\sigma_L y_1 + y_3 \\ \delta_L y_1 \end{bmatrix}, & y_1 < 0, \\
\begin{bmatrix} -1 \\ \tau_S \\ -\delta_S \end{bmatrix}, & y_1 > 0.
\end{cases}
\label{eq:normalForm}
\end{equation}
The system \eqref{eq:normalForm} is the boundary equilibrium normal form
in three dimensions evaluated at a boundary equilibrium bifurcation.

%...............................................................................
\begin{proposition}
If $p^{\sf T} q < 0$ and $\det(\Phi) \ne 0$,
then there exists a linear change of coordinates that converts \eqref{eq:fApprox} to \eqref{eq:normalForm}.
\label{pr:normalForm}
\end{proposition}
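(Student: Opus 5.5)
The plan is to build the new coordinates explicitly from the rows of the observability matrix $\Phi$, so that the left piece of \eqref{eq:fApprox} takes companion form by Cayley--Hamilton. The constant right vector field is then checked against the coefficients of the characteristic polynomial of $B$, computed via a rank-one (matrix determinant lemma) expansion.

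\textbf{Step 1: choosing $y_1$.} Set $k = -1/(p^{\sf T} q)$, which is positive because $p^{\sf T} q < 0$, and take $y_1 = k p^{\sf T} u$. Then $y_1 < 0$ if and only if $p^{\sf T} u < 0$, so the switching condition is preserved. On the right, $\dot y_1 = k p^{\sf T} q = -1$, as required.

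\textbf{Step 2: choosing $y_2$ and $y_3$.} These are forced by requiring the left equations of \eqref{eq:normalForm}:
\begin{align*}
y_2 &= k\left( p^{\sf T} A - \tau_L p^{\sf T} \right) u, \\
y_3 &= k\left( p^{\sf T} A^2 - \tau_L p^{\sf T} A + \sigma_L p^{\sf T} \right) u .
\end{align*}
By construction $\dot y_1 = \tau_L y_1 + y_2$ and $\dot y_2 = -\sigma_L y_1 + y_3$ under $\dot u = A u$. Moreover
\[
\dot y_3 = k\, p^{\sf T}\left(A^3 - \tau_L A^2 + \sigma_L A\right) u = k \delta_L p^{\sf T} u = \delta_L y_1
\]
by the Cayley--Hamilton theorem. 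The transformation is $y = M u$ with $M = k T \Phi$, where $T$ is unit lower triangular. Hence $M$ is invertible precisely because $\det(\Phi) \ne 0$.

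\textbf{Step 3: the right vector field.} Under $\dot u = q$ we get
\begin{align*}
\dot y_2 &= k p^{\sf T} A q + \tau_L, \\
\dot y_3 &= k\left(p^{\sf T} A^2 q - \tau_L p^{\sf T} A q\right) - \sigma_L .
\end{align*}
These must equal $\tau_S$ and $-\delta_S$, and this is the main step. Write $B = A + k q p^{\sf T} A$, a rank-one perturbation of $A$. By the matrix determinant lemma,
\[
\det(\lambda I - B) = \det(\lambda I - A)\left(1 - k p^{\sf T} A (\lambda I - A)^{-1} q\right).
\]
For large $|\lambda|$, expand $(\lambda I - A)^{-1} = \sum_{j \ge 0} A^j \lambda^{-j-1}$ and compare coefficients of the resulting polynomial identity.
\begin{itemize}
\item The $\lambda^2$ coefficient gives $-\tau_S = -\tau_L - k p^{\sf T} A q$.
\item The $\lambda^1$ coefficient gives $-\delta_S = \sigma_L - k\left(p^{\sf T} A^2 q - \tau_L p^{\sf T} A q\right)$.
\end{itemize}
(Equivalently, $\tau_S = \operatorname{trace}(B)$ can be checked directly.) These are exactly the expressions for $\dot y_2$ and $-\dot y_3$ found above, which completes the reduction.

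The only delicate point is this coefficient bookkeeping in Step 3, in particular tracking the sign conventions $-\tau_S\lambda^2 - \delta_S\lambda$ for the characteristic polynomial of $B$. Everything else is forced by the construction.
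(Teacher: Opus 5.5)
Your Steps 1 and 2 are correct and give a self-contained construction: $M = kT\Phi$ is invertible since $\det(\Phi)\neq 0$, $k>0$ preserves the sides of the switching surface, and Cayley--Hamilton gives the companion form. The paper has no argument of its own here and simply defers to Theorem 7 of \cite{Si18d}. The problem is the final identification in Step 3, which is the step you yourself flagged. Under your convention $\det(\lambda I - B) = \lambda^3 - \tau_S\lambda^2 - \delta_S\lambda$, your $\lambda^1$ comparison $-\delta_S = \sigma_L - k(p^{\sf T}A^2q - \tau_L p^{\sf T}Aq)$ is correct. But its right-hand side is $-\dot y_3$, so what you have actually proved is $\dot y_3 = +\delta_S$. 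The normal form \eqref{eq:normalForm} requires $\dot y_3 = -\delta_S$, so the claimed match fails by a sign.

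More careful bookkeeping under that convention cannot repair this. Take any linear change $y = Mu$ taking \eqref{eq:fApprox} to the form \eqref{eq:normalForm} with right-hand vector $r = (-1,r_2,r_3)^{\sf T}$. Then $p^{\sf T}M^{-1}$ is a positive multiple of $e_1^{\sf T} = (1,0,0)$. Hence $MBM^{-1} = (I + r e_1^{\sf T})\,C$, with $C$ the companion matrix \eqref{eq:C}. A direct computation then gives $\det(\lambda I - B) = \lambda(\lambda^2 - r_2\lambda - r_3)$. So with the printed sign, $r_3 = -\delta_S$ is impossible unless $\delta_S = 0$: the printed characteristic polynomial of $B$ contains a sign typo.

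The convention consistent with \eqref{eq:normalFormSVF}, with \eqref{eq:normalFormParams} (where $\delta_S = \lambda^S_1\lambda^S_2/\gamma^2$), and with $d$ in \eqref{eq:gLgS} is $\det(\lambda I - B) = \lambda^3 - \tau_S\lambda^2 + \delta_S\lambda$. Under it your expansion gives $\delta_S = \sigma_L + k\tau_L p^{\sf T}Aq - k\,p^{\sf T}A^2q = -\dot y_3$, and the proof closes.

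The same similarity computation can also replace the determinant-lemma expansion. Apply it to your own $M$, for which $MAM^{-1} = C$ and $Mq = (-1,\dot y_2,\dot y_3)^{\sf T}$. This makes the sign immediate.
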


%-------------------------------------------------------------------------------
\subsection{A time scaling to linearise the sliding vector field}

The attracting sliding region of \eqref{eq:normalForm} is
$\left\{ y \in \mathbb{R}^3 \,\middle|\, y_1 = 0,\, y_2 > 0 \right\}$.
On this region the sliding vector field is
\begin{equation}
\dot{y} = \frac{1}{y_2 + 1} \begin{bmatrix} 0 \\ \tau_S y_2 + y_3 \\ -\delta_S y_2 \end{bmatrix}.
\label{eq:normalFormSVF}
\end{equation}
The factor $\frac{1}{y_2 + 1}$ is positive throughout the attracting sliding region
and can be removed by rescaling time.
Orbits of the linear system
\begin{equation}
\dot{y} = \begin{bmatrix} 0 \\ \tau_S y_2 + y_3 \\ -\delta_S y_2 \end{bmatrix},
\label{eq:normalFormSVFScaled}
\end{equation}
follow the same paths as those of \eqref{eq:normalFormSVF}, but with a different temporal parameterisation.
Formally this is justified by Theorem 6 of Filippov \cite[\S 9]{Fi88}.

Patching \eqref{eq:normalFormSVFScaled} to the left piece of \eqref{eq:normalForm}
produces the piecewise-linear hybrid system
\begin{equation}
\dot{y} = \begin{cases}
\begin{bmatrix} \tau_L y_1 + y_2 \\ -\sigma_L y_1 + y_3 \\ \delta_L y_1 \end{bmatrix}, & \text{until $y_1 = 0$}, \\
\begin{bmatrix} 0 \\ \tau_S y_2 + y_3 \\ -\delta_S y_2 \end{bmatrix}, & \text{until $y_2 = 0$}.
\end{cases}
\label{eq:pwl2}
\end{equation}
The switching conditions in \eqref{eq:pwl2} are such that orbits of \eqref{eq:pwl2}
match one-to-one to orbits of \eqref{eq:normalForm} with $y_1 \le 0$.
Moreover, \eqref{eq:pwl2} induces a unique semi-flow on $\left\{ y \in \mathbb{R}^3 \,\middle|\, y_1 \le 0 \right\}$
for all $t \ge 0$ and for any values of the five parameters.

%-------------------------------------------------------------------------------
\subsection{A time scaling to eliminate one parameter}

We now consider case (iii) of Theorem \ref{th:main}
and perform a second time scaling that leads to a reduction from the five-parameter form \eqref{eq:pwl2}
to the four-parameter form \eqref{eq:pwl}--\eqref{eq:gLgS}.

In case (iii), $A$ has eigenvalues $\alpha \pm \ri \beta$ and $-\gamma$,
where $\alpha \in \mathbb{R}$, $\beta > 0$, and $\gamma > 0$,
and $B$ has eigenvalues $0$ and $\lambda^S_1, \lambda^S_2 \in \mathbb{C}$.
By applying the linear time scaling $s = \gamma t$ to \eqref{eq:fApprox}, we obtain
\begin{equation}
\frac{d u}{d s} = \begin{cases}
\frac{A u}{\gamma}, & p^{\sf T} u < 0, \\
\frac{q}{\gamma}, & p^{\sf T} u > 0.
\end{cases}
\label{eq:fApproxScaled}
\end{equation}
The scaling does not alter the stability of $u = \bO$ because $\gamma > 0$.

The Jacobian matrix of the left piece of \eqref{eq:fApproxScaled} is
$\frac{A}{\gamma}$, while
the Jacobian matrix of the sliding vector field of \eqref{eq:fApproxScaled} is
$\left( I - \frac{q p^{\sf T}}{p^{\sf T} q} \right) \frac{A}{\gamma}$.
The coefficients of the characteristic polynomials of these matrices are
\begin{equation}
\begin{aligned}
\tau_L &= \frac{2 \alpha}{\gamma} - 1, & \qquad
\sigma_L &= -\frac{2 \alpha}{\gamma} + \frac{\alpha^2 + \beta^2}{\gamma^2}, & \qquad
\delta_L &= -\frac{\alpha^2 + \beta^2}{\gamma^2}, \\
\tau_S &= \frac{\lambda^S_1 + \lambda^S_2}{\gamma}, & \qquad
\delta_S &= \frac{\lambda^S_1 \lambda^S_2}{\gamma^2}.
\end{aligned}
\label{eq:normalFormParams}
\end{equation}
By substituting these formulas into \eqref{eq:pwl2}
we obtain \eqref{eq:pwl}--\eqref{eq:gLgS}, with $a$, $b$, $c$, and $d$ given by \eqref{eq:pwlParams}.

The time scaling $s = \gamma t$ was chosen so that
the Jacobian matrix of the left piece of \eqref{eq:fApproxScaled} has an eigenvalue of $-1$.
The transformation in Proposition \ref{pr:normalForm} affects a similarity transform
on this matrix, taking it to the Jacobian matrix $\rD g^L$ for \eqref{eq:gLgS}.
Similarity transforms do not alter eigenvalues,
thus $-1$ is an eigenvalue of $\rD g^L$ for all $a, b \in \mathbb{R}$.

%===============================================================================
\section{Main proof}
\label{sec:proof}

In this section we prove Theorem \ref{th:main}.
To prove $x^*$ is unstable in case (i),
we use an eigenvector associated with the unstable eigenvalue to show that \eqref{eq:f} has an orbit emanating from $x^*$.
To prove $x^*$ is asymptotically stable in case (ii),
we show that the forward orbit of $y = (0,0,-1)$ under $g^L$ never reintersects $y_1 = 0$ (see Appendix \ref{app:threeNegativeEigs})
and thus converges to $\bO$.
For case (iii) with $\Lambda < 1$ we use Proposition \ref{pr:hots} to obtain
the asymptotic stability of $x^*$ from that of $\bO$ for \eqref{eq:pwl}--\eqref{eq:gLgS}.	
But to prove instability in the case $\Lambda > 1$ we cannot use Proposition \ref{pr:hots},
so retain the higher order terms in our analysis.
This requires a spatial blow-up to prove that $\Lambda > 1$ implies the equilibrium is not Lyapunov stable.

To prove the asymptotic stability of $\bO$,
we usurp the linear homogeneity of \eqref{eq:pwl}--\eqref{eq:gLgS} and \eqref{eq:pwl2}.
These systems induce a unique semi-flow $\psi_t(y)$ on the left half-space
\begin{equation}
\Psi = \left\{ y \in \mathbb{R}^3 \,\middle|\, y_1 \le 0 \right\},
\label{eq:Psi}
\end{equation}
that is {\em linearly homogeneous} in the sense that
\begin{equation}
\psi_t(\nu y) = \nu \psi_t(y),
\label{eq:linearHomogeneity}
\end{equation}
for all $t \ge 0$, $y \in \Psi$, and $\nu \ge 0$.
This identity holds because each piece of \eqref{eq:pwl} or \eqref{eq:pwl2} is linear, as are the switching rules.
In Appendix \ref{app:asyStab}, we prove that if $\psi_t(y) \to \bO$ for all $y$ in a neighbourhood of $\bO$,
then $\bO$ is asymptotically stable.
With more effort one could show that $\bO$ is exponentially stable, as in Lasota and Strauss \cite[Theorem 1.2]{LaSt71},
but this is not needed for our purposes.

%...............................................................................
\begin{proof}[Proof of Theorem \ref{th:main}]
We prove (i), (ii), and (iii) in order.
\begin{enumerate}
\item
First suppose $A v = \lambda v$, where $\lambda > 0$ and $v \ne \bO$.
Then $\dot{x} = f^L(x)$ has an orbit $\xi(t)$ emanating from $x^*$ in the direction $v$ \cite{KaHa95}.
Notice $p^{\sf T} v \ne 0$, by Remark \ref{re:pbh}, so we can assume $p^{\sf T} v < 0$.
Thus $\xi(t) \in \Omega_L$, so $\xi(t)$ a Filippov solution of \eqref{eq:f}, hence $x^*$ is unstable.

Second suppose $B w = \lambda w$, where $\lambda > 0$ and $w \ne \bO$.
By \eqref{eq:B2}, $\dot{x} = f^S(x)$ has an orbit $\xi(t)$ emanating from $x^*$ in the direction $w$.
Since $p^{\sf T} B = \bO^{\sf T}$ by \eqref{eq:B}, we have $p^{\sf T} w = \frac{p^{\sf T} B w}{\lambda} = 0$.
Thus $A w \ne \lambda w$ by Remark \ref{re:pbh}.
But \eqref{eq:B} and $B w = \lambda w$ give
\begin{equation}
A w - \frac{q p^{\sf T} A w}{p^{\sf T} q} = \lambda w,
\nonumber
\end{equation}
thus $p^{\sf T} A w \ne 0$.
Thus $w$ is not tangent to $\Gamma$, so, substituting $w \mapsto -w$ if necessary,
$\xi(t)$ belongs to the attracting sliding region and is a Filippov solution of \eqref{eq:f}, hence $x^*$ is unstable.
\item
Here we work with the piecewise-linear system \eqref{eq:pwl2}.
This was obtained from \eqref{eq:f} by forming the truncated system \eqref{eq:fApprox},
then applying a linear change of coordinates (Proposition \ref{pr:normalForm}), and lastly rescaling time.
For \eqref{eq:pwl2}, forward orbits in $\Psi$ either remain in $\Psi$ for all time,
so converge to $\bO$ due to the assumption on the eigenvalues of $A$,
or reach the attracting sliding region in finite time.
Forward orbits on the attracting sliding region
either remain in this region for all time,
so converge to $\bO$ due to the assumption on the eigenvalues of $B$,
or reach $\tilde{\Gamma}$ at a point with $y_3 < 0$ in finite time.
By Lemma \ref{le:threeNegativeEigs}, the forward orbit
of any $y \in \tilde{\Gamma}$ with $y_3 < 0$
remains in $\Psi$ for all time, so converges to $\bO$.
Thus all forward orbits of \eqref{eq:pwl2} converge to $\bO$,
thus $\bO$ is asymptotically stable for \eqref{eq:pwl2} by Lemma \ref{le:asyStab}.
Thus $\bO$ is asymptotically stable for \eqref{eq:normalForm}, because the time scaling does not affect stability,
and the forward orbits of points with $y_1 > 0$ quickly reach $\Psi$.
Thus $\bO$ is asymptotically stable for \eqref{eq:fApprox} by Proposition \ref{pr:normalForm},
and hence $\bO$ is asymptotically stable for \eqref{eq:f} by Proposition \ref{pr:hots}.
\item
We first show that in this case all forward orbits of the linear system $\dot{y} = g^L(y)$
either reach $\tilde{\Sigma}$ at a point with $y_2 > 0$ in finite time, or converge to $\bO$ as $t \to \infty$.
The matrix $\rD g^L$ has eigenvalues $-1$ and $\frac{\alpha \pm \ri \beta}{\gamma}$,
so all forward orbits approach the two-dimensional invariant subspace $E$ associated with the complex eigenvalues.
The subspace $E$ does not coincide with $\tilde{\Sigma}$,
because the normal vector $[1,0,0]^{\sf T}$ of $\tilde{\Sigma}$ is not a left eigenvector of $\rD g^L$ for the eigenvalue $-1$,
thus orbits on $E$ repeatedly intersect $\tilde{\Sigma}$ transversally.
So if a forward orbit of $\dot{y} = g^L(y)$ does not converge to $\bO$,
it approaches $E$, so must at some time intersect $\tilde{\Sigma}$ transversally with $y_2 > 0$.

Since $\dot{y} = g^S(y)$ is a linear system with negative or complex eigenvalues,
all forward orbits of $\dot{y} = g^S(y)$ on $\tilde{\Sigma}$
either reach $\tilde{\Gamma}$ at a point with $y_3 < 0$ in finite time, or converge to $\bO$ as $t \to \infty$.

If $\Lambda$ is undefined, then the forward orbit of $(0,0,-1)$ under \eqref{eq:pwl}--\eqref{eq:gLgS} does not return to $\tilde{\Gamma}$.
In this case the orbit converges to $\bO$ as $t \to \infty$, as does every forward orbit of \eqref{eq:pwl}--\eqref{eq:gLgS}.
If $\Lambda$ is defined and $\Lambda < 1$, then the forward orbit of $(0,0,-1)$ under \eqref{eq:pwl}--\eqref{eq:gLgS}
reintersects $\tilde{\Gamma}$ at $(0,0,-\Lambda)$, then at $(0,0,-\Lambda^2)$, and so on, thus converges to $\bO$ as $t \to \infty$,
as does every forward orbit of \eqref{eq:pwl}--\eqref{eq:gLgS}.
In these cases $\bO$ is asymptotically stable for \eqref{eq:pwl}--\eqref{eq:gLgS} by Lemma \ref{le:asyStab},
so $x^*$ is asymptotically stable for \eqref{eq:f} by Propositions \ref{pr:hots} and \ref{pr:normalForm}.

Finally suppose $\Lambda > 1$.
By performing coordinate changes and time scalings
similar to those described in \S\ref{sec:red},
but not the initial truncation, we can convert \eqref{eq:f} to the form
\begin{equation}
\dot{y} = \begin{cases}
g^L(y) + o(y), & \text{until $y_1 = 0$}, \\
g^S(y) + o(y), & \text{until $y_2 = 0$}.
\end{cases}
\label{eq:pws}
\end{equation}
This is identical to \eqref{eq:pwl}, but retains higher order terms
($o(y)$ denotes terms of order greater than one).
Note that the required coordinate changes need to be nonlinear
to the obtain linear switching rule in \eqref{eq:pws},
but to first order the coordinate changes are identical to those described in \S\ref{sec:red}.
Since no truncation has been applied,
the dynamics of \eqref{eq:f} in $\Omega_L \cap \Sigma$
is topologically equivalent to the dynamics of \eqref{eq:pws} on $\Psi$.
Thus it remains to show $\bO$ is unstable for \eqref{eq:pws}.

Given small $\delta > 0$, consider the spatial blow-up
\begin{equation}
\hat{y} = \frac{y}{\delta}.
\label{eq:blowUp}
\end{equation}
In these coordinates \eqref{eq:pws} becomes
\begin{equation}
\dot{\hat{y}} = \begin{cases}
g^L(\hat{y}) + o(\delta), & \text{until $\hat{y}_1 = 0$}, \\
g^S(\hat{y}) + o(\delta), & \text{until $\hat{y}_2 = 0$}.
\end{cases}
\label{eq:pwsBlownUp}
\end{equation}
Now consider a smooth extension of \eqref{eq:pwsBlownUp} that includes $\delta \le 0$,
and notice that with $\delta = 0$ \eqref{eq:pwsBlownUp} reverts to \eqref{eq:pwl}.

If the forward orbit of $(0,0,-1)$ under \eqref{eq:pwsBlownUp}
returns to $\tilde{\Gamma}$, write its first point of return as $\left( 0, 0, -\kappa(\delta) \right)$.
Notice $\kappa$ exists and is equal to $\Lambda > 1$ when $\delta = 0$.
Thus by continuity of the semi-flow of \eqref{eq:pwsBlownUp},
there exists $\delta_0 > 0$ such that $\kappa(\delta) > \frac{\Lambda + 1}{2}$ for all $\delta \in (0,\delta_0]$.
Then for \eqref{eq:pws}, the forward orbit of $(0,0,-\delta)$ for any $\delta \in (0,\delta_0]$
first returns to $\tilde{\Gamma}$ at $\left( 0, 0, -\kappa(\delta) \delta \right)$.
For any $\delta \in (0,\delta_0]$ the orbit repeatedly intersects $\tilde{\Gamma}$,
eventually obtaining a norm of at least $\delta_0$, thus $\bO$ is unstable.
\end{enumerate}
\end{proof}

%===============================================================================
\section{Numerical computations}
\label{sec:numerics}

In this section we explore how $\Lambda$ varies with $a$, $b$, $c$, and $d$.

To evaluate $\Lambda$, we use the formula \eqref{eq:Lambda}
and compute the orbit shown bold in Fig.~\ref{fig:B} with $z = -1$.
Numerically we follow the forward orbit $\phi(t)$ of $y = (0,0,-1)$ under $\dot{y} = g^L(y)$
and search for intersections with $\tilde{\Sigma}$.
At each time step $\phi(t)$ is evaluated from an explicit formula for the flow.
If the value of $\phi_1(t)$ becomes positive before a threshold on the norm of $\| \phi(t) \|$ is exceeded,
root-finding via the secant method is used to find the first $t > 0$ at which $\hat{y} = \phi(t) \in \tilde{\Sigma}$.

The forward orbit $\hat{\phi}(t)$ of $\hat{y}$ under $\dot{y} = g^S(y)$
is then followed in the same fashion,
and if $\hat{\phi}_2(t)$ becomes negative before a threshold on the norm of $\| \hat{\phi}(t) \|$ is exceeded,
root-finding is used to determine the first $t > 0$ at which $\hat{\hat{y}} = \hat{\phi}(t) \in \tilde{\Gamma}$.
If neither threshold is reached, then $\Lambda = -\hat{\hat{y}}_3$.

The values $a$, $b$, $c$, and $d$ are given by \eqref{eq:pwlParams}, so involve some constraints.
Specifically $b > \frac{a^2}{4}$, because $\beta > 0$.
Also $d > 0$, and $d > \frac{c^2}{4}$ in the case $c > 0$, because $\lambda^S_1$ and $\lambda^S_2$ are either complex or both negative.

%%%%%%%%%%%%%%%%%%%%%%%%%%%%%%%%%%%%%%%%%%%%%%%%%%%%%%%%%%%%%%%%%%%%%%%%%%%%%%%%%%%%%%%%%%%%%%%%%%%%
\begin{figure}[b!]
\begin{center}
\includegraphics[width=7.2cm]{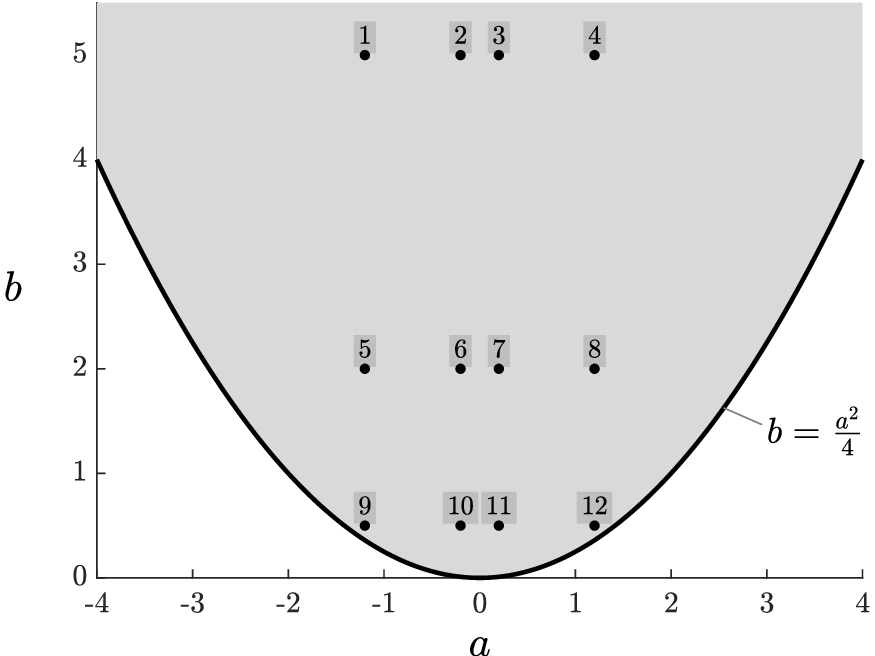}
\caption{
The twelve pairs of values $(a,b)$ used in Fig.~\ref{fig:D}.
Each pair uses $a \in \{ -1.2,-0.2,0.2,1.2 \}$ and $b \in \{ 0.5, 2, 5 \}$.
\label{fig:C}
} 
\end{center}
\end{figure}
%%%%%%%%%%%%%%%%%%%%%%%%%%%%%%%%%%%%%%%%%%%%%%%%%%%%%%%%%%%%%%%%%%%%%%%%%%%%%%%%%%%%%%%%%%%%%%%%%%%%

%%%%%%%%%%%%%%%%%%%%%%%%%%%%%%%%%%%%%%%%%%%%%%%%%%%%%%%%%%%%%%%%%%%%%%%%%%%%%%%%%%%%%%%%%%%%%%%%%%%%
\begin{figure}[b!]
\begin{center}
\includegraphics[width=15cm]{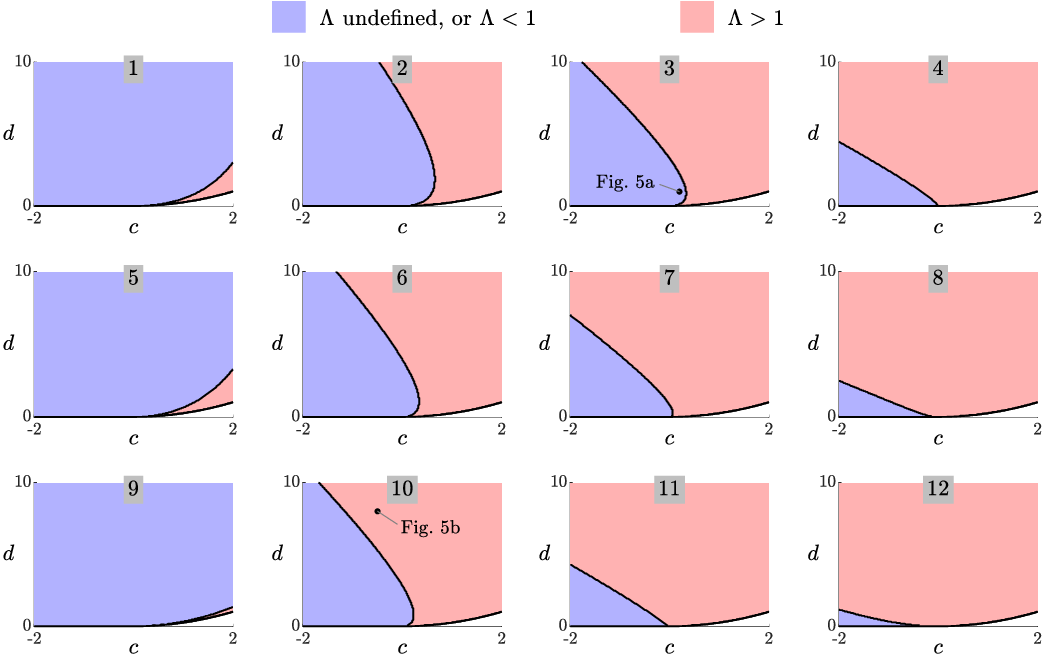}
\caption{
Regions of the $(c,d)$-plane where $\Lambda$ is undefined or $\Lambda < 1$ (blue) and $\Lambda > 1$ (red)
for the values of $a$ and $b$ indicated in Fig.~\ref{fig:C}.
Points where the algorithm described in text failed to obtain a value for $\Lambda$
are coloured blue where the norm of the orbit at some time fell below $10^{-6}$,
and red where the norm exceeded $10^6$.
In the region $c > 0$ and $d < \frac{c^2}{4}$ (white) $\Lambda$ does not apply
(refer instead to case (i) of Theorem \ref{th:main}).
\label{fig:D}
} 
\end{center}
\end{figure}
%%%%%%%%%%%%%%%%%%%%%%%%%%%%%%%%%%%%%%%%%%%%%%%%%%%%%%%%%%%%%%%%%%%%%%%%%%%%%%%%%%%%%%%%%%%%%%%%%%%%

%%%%%%%%%%%%%%%%%%%%%%%%%%%%%%%%%%%%%%%%%%%%%%%%%%%%%%%%%%%%%%%%%%%%%%%%%%%%%%%%%%%%%%%%%%%%%%%%%%%%
\begin{figure}[b!]
\begin{center}
\includegraphics[width=15cm]{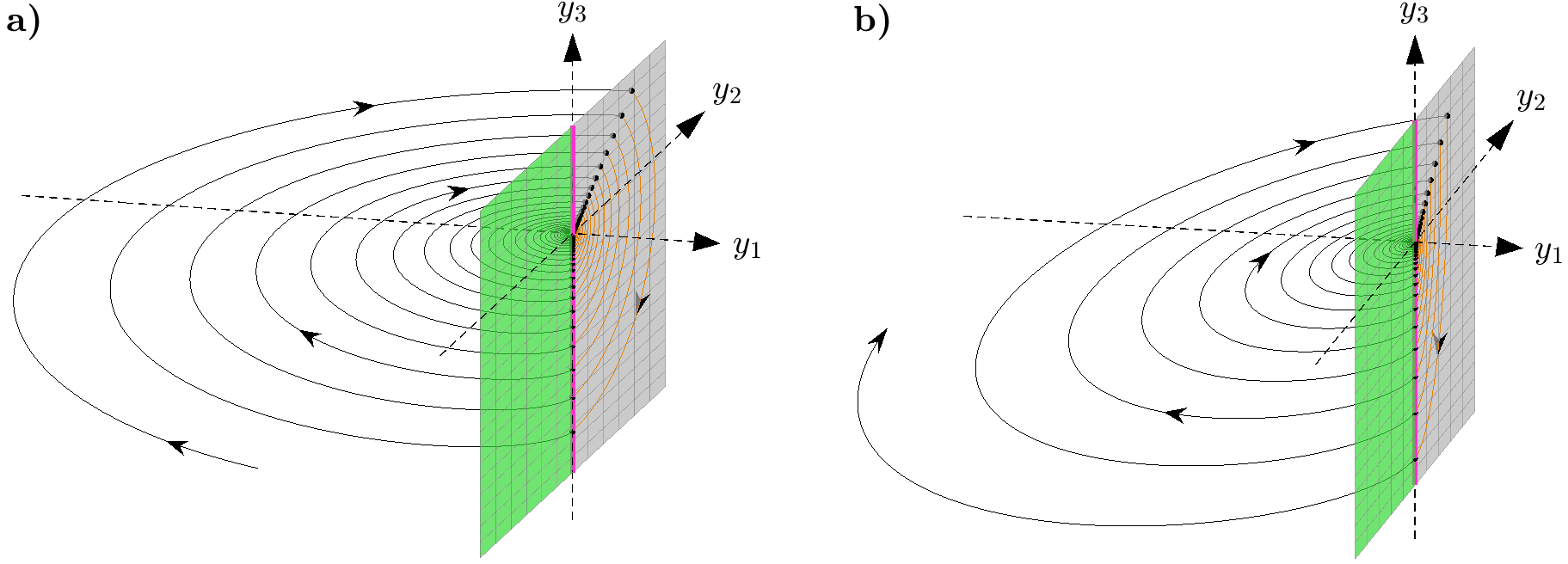}
\caption{
Sample orbits of \eqref{eq:pwl}--\eqref{eq:gLgS} with
$(a,b,c,d) = (0.2,5,0.2,1)$ in (a) and
$(a,b,c,d) = (-0.2,0.5,-0.5,8)$ in (b).
\label{fig:E}
} 
\end{center}
\end{figure}
%%%%%%%%%%%%%%%%%%%%%%%%%%%%%%%%%%%%%%%%%%%%%%%%%%%%%%%%%%%%%%%%%%%%%%%%%%%%%%%%%%%%%%%%%%%%%%%%%%%%

For twelve different combinations of $a$ and $b$ values, see Fig.~\ref{fig:C},
we show in Fig.~\ref{fig:D} the regions in the $(c,d)$-plane where $\Lambda$ is undefined or $\Lambda < 1$ (blue), and $\Lambda > 1$ (red).
These plots provide an overall impression for where $\Lambda < 1$ and where $\Lambda > 1$,  and display some interesting features.
For example, if $a > 0$ and $c > 0$,
then both the complex eigenvalues of $\rD g^L$
and the non-zero eigenvalues of $\rD g^S$ are repelling.
In this case we might expect $\Lambda > 1$,
because the forward orbit of any $y \ne \bO$ under $\dot{y} = g^L(y)$ or $\dot{y} = g^S(y)$ diverges.
Indeed the six right-most plots in Fig.~\ref{fig:D} mostly show $\Lambda > 1$ where $c > 0$,
but there are some points, such as the point indicated in plot 3, for which $\Lambda < 1$.
Here $\bO$ is an asymptotically stable equilibrium of \eqref{eq:pwl}--\eqref{eq:gLgS}, see Fig.~\ref{fig:E}a.
Each arc of the forward orbit is a segment of a repelling spiral,
yet the arcs are connected in such a way that the orbit converges to $\bO$.

Similarly if $a < 0$ and $c < 0$
then $\bO$ is asymptotically stable for each piece of \eqref{eq:pwl}--\eqref{eq:gLgS},
and indeed the six left-most plots in Fig.~\ref{fig:D} mostly show $\Lambda < 1$ where $c < 0$.
But $\bO$ can be unstable for \eqref{eq:pwl}--\eqref{eq:gLgS}, e.g.~at the point indicated in plot 10, see Fig.~\ref{fig:E}b.
Here each arc of the forward orbit is a segment of a spiral that converges to $\bO$,
yet the orbit diverges.

A closed-form expression for the boundary $\Lambda = 1$ is likely unavailable because
no closed-form expression is available for the time at which the forward orbit of $(0,0,-1)$ first returns to $\tilde{\Sigma}$.
In most of the plots in Fig.~\ref{fig:D} this boundary curves to the left as $d$ increases,
meaning that $\bO$ is less stable at larger values of $d$.

%===============================================================================
\section{Discussion}
\label{sec:conc}

In this paper we have characterised the stability of generic boundary equilibria of three-dimensional Filippov systems.
In order to apply the results to a mathematical model, one should evaluate the quantities \eqref{eq:p}--\eqref{eq:Phi} at an equilibrium $x^*$,
and compute the eigenvalues of $A$ and $B$.
Cases (i) and (ii) of Theorem \ref{th:main} address the situations
for which the stability of the $x^*$ can be deduced from these eigenvalues.
In case (iii), nearby orbits repeatedly switch between regular motion and sliding motion,
and the stability is governed by the behaviour of the forward orbit of $y = (0,0,-1)$
for the corresponding piecewise-linear system \eqref{eq:pwl}--\eqref{eq:gLgS}.
Here the regular and sliding dynamics are both rotational
and the stability of $x^*$ is governed by the sign of $\ln(\Lambda)$.

For higher-dimensional systems,
one could consider the analogous return map on the surface of visible folds.
For an $n$-dimensional system, this map will be $(n-2)$-dimensional and linear for the truncated leading-order approximation to the system.
We expect that in many cases the stability of the boundary equilibrium
will be dictated by the stability of the origin of this map.

%===============================================================================
\section*{Acknowledgements}

This work was supported by Marsden Fund contract MAU2209 managed by Royal Society Te Ap\={a}rangi.
The author thanks Jack Sandford for assistance with the numerical explorations.

\appendix

%===============================================================================
\section{Asymptotic stability for linear homogeneous semi-flows}
\label{app:asyStab}

%...............................................................................
\begin{lemma}
Let $\psi_t(y)$ be a linearly homogeneous semi-flow on a set $\Xi$ containing $\bO$.
If there exists $r > 0$ such that $\psi_t(y) \to \bO$ as $t \to \infty$ for all $y \in \Xi$ with $\| y \| < r$,
then $\bO$ is an asymptotically stable equilibrium of $\psi_t$.
\label{le:asyStab}
\end{lemma}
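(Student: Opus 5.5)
The proof rests on two ingredients: a compactness argument on a sphere, which gives a uniform return time and a uniform bound on excursions, and linear homogeneity, which transports these bounds to every scale. I take $\Xi$ to be closed and invariant under multiplication by $\nu \ge 0$; this is needed for \eqref{eq:linearHomogeneity} to make sense, and it holds for $\Xi = \Psi$ in \eqref{eq:Psi}. I also take $\psi_t$ to be defined and jointly continuous in $(t,y)$ for all $t \ge 0$, as it is for \eqref{eq:pwl}--\eqref{eq:gLgS} and \eqref{eq:pwl2}.

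First, $\bO$ is an equilibrium, since $\psi_t(\bO) = \psi_t(0 \cdot \bO) = 0 \cdot \psi_t(\bO) = \bO$. The attractivity part of Definition \ref{df:stable} is exactly the hypothesis. So only Lyapunov stability remains, and I aim to find $C > 0$ such that $\| \psi_t(y) \| \le C \| y \|$ for all $y \in \Xi$ and $t \ge 0$.

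Let $S = \{ y \in \Xi \,|\, \| y \| = r/2 \}$, which is compact because $\Xi$ is closed. For each $y \in S$ we have $\psi_t(y) \to \bO$, so there is a time $T_y$ with $\| \psi_{T_y}(y) \| < r/4$. By continuity of $\psi_{T_y}$, the same inequality holds on an open neighbourhood $U_y$ of $y$ in $S$. Extracting a finite subcover $U_{y_1}, \ldots, U_{y_k}$ and setting $T_{\max} = \max_j T_{y_j}$, every $y \in S$ has some $\tau(y) \le T_{\max}$ with $\| \psi_{\tau(y)}(y) \| < r/4$. By joint continuity, the quantity $M = \sup \{ \| \psi_t(y) \| \,|\, t \in [0,T_{\max}],\, y \in S \}$ is finite.

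Now let $y \in \Xi$ be nonzero and write $y = \nu \hat{y}$ with $\hat{y} \in S$ and $\nu = 2\|y\|/r$. By \eqref{eq:linearHomogeneity}, $\| \psi_t(y) \| \le \frac{2M}{r} \| y \|$ for $t \in [0,T_{\max}]$. Moreover, at the time $\tau(\hat{y}) \le T_{\max}$ the norm is less than $\| y \|/2$. Applying the same reasoning to $\psi_{\tau}(y)$ via the group property, and iterating, gives a sequence of times $0 = t_0 < t_1 < \cdots$ with gaps at most $T_{\max}$ along which the norm at least halves each step. Between consecutive times the norm is bounded by $\frac{2M}{r}$ times the norm at the earlier time. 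Hence $\| \psi_t(y) \| \le C \| y \|$ for all $t \ge 0$, with $C = \frac{2M}{r}$. A side point to check is that the $t_j$ are unbounded, so that this covers all $t \ge 0$. This holds if each step uses a positive time. If instead $\tau(\hat y) = 0$ we would have $\|\hat y\| < r/4$, which contradicts $\hat y \in S$.

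Given a neighbourhood $\cN$ of $\bO$ containing the $\ee$-ball, the subneighbourhood $\cM = \{ \| y \| < \min(\ee/C, r) \}$ then works, and this proves Lyapunov stability. I expect the main obstacle to be the uniformity step: obtaining a single $T_{\max}$ and a finite bound $M$. This is where compactness of $S$ (closedness of $\Xi$) and joint continuity of the semi-flow are essential. If joint continuity were unavailable, I would instead bound $\sup_{t \le T_{y_j}} \| \psi_t \|$ on each $U_{y_j}$ separately, using continuity of the orbit map in $y$ uniformly on compact time intervals.
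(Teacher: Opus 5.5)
Your argument is correct, but it takes a different route from the paper. The paper argues by contradiction. If $\bO$ is not Lyapunov stable, rescaling by homogeneity gives points $k y^{(k)}$ on the unit sphere whose orbits stay outside the unit ball on $[0,t_k]$, with $t_k \to \infty$. A limit point $z$ of these points, together with continuity of each $\psi_t$ at $z$, then satisfies $\|\psi_t(z)\| \ge \frac{1}{2}$ for all $t$. This contradicts the global attractivity that homogeneity gives. You instead argue directly: compactness of $S$ gives a uniform halving time $T_{\max}$ and a uniform excursion bound $M$, and homogeneity plus the group property carry these to every scale, yielding $\|\psi_t(y)\| \le \frac{2M}{r}\|y\|$. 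The paper's route is shorter and avoids the iteration. However, its assertion $t_k \to \infty$ quietly uses the same boundedness of $\psi$ on $[0,T]$ times a compact set that your $M$ makes explicit. Your route also gives more. Since $t_{j+1} \le (j+1)T_{\max}$ and the norm at $t_j$ is below $2^{-j}\|y\|$, you get $\|\psi_t(y)\| \le \frac{4M}{r}\,2^{-t/T_{\max}}\|y\|$, i.e.\ exponential stability, which the paper says would need more effort.

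One small repair is needed. Your reason that the $t_j$ are unbounded, namely that each step uses a positive time, is not enough on its own, because positive increments can be summable. The fix is that $\tau$ takes values in the finite set $\{T_{y_1},\ldots,T_{y_k}\}$. Each of these is positive for the reason you give, so every increment is at least $\min_j T_{y_j} > 0$. Alternatively, if $t_j \to t^* < \infty$, continuity forces $\psi_{t^*}(y) = \bO$, and the orbit stays at $\bO$ thereafter.
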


%...............................................................................
\begin{proof}
By \eqref{eq:linearHomogeneity}, $\psi_t(y) \to \bO$ as $t \to \infty$ for all $y \in \Xi$.
This is because for any $y \in \Xi$ there exists $\nu > 0$ such that $\| \nu y \| < r$.

Suppose for a contradiction $\bO$ is not Lyapunov stable.
Then there exists $\ee > 0$
such that for all integers $k \ge \frac{1}{\ee}$ there exists $y^{(k)} \in \Xi$ with $\left\| y^{(k)} \right\| \le \frac{1}{k}$
and $\left\| \psi_t \left( y^{(k)} \right) \right\| \ge \ee$ for some $t_k > 0$.
We can assume $\left\| \psi_{t_k} \left( y^{(k)} \right) \right\| = \ee$
and $\left\| \psi_t \left( y^{(k)} \right) \right\| \ge \frac{1}{k}$ for all $0 \le t \le t_k$.

By linear homogeneity, $\left\| \psi_t \left( k y^{(k)} \right) \right\| \ge 1$ for all $0 \le t \le t_k$,
and $t_k \to \infty$ as $k \to \infty$.
Each $k y^{(k)}$ belongs to the compact set $\mathbb{S}^2$,
thus a subsequence $k_j y^{(k_j)}$ converges to a point $z \in \mathbb{S}^2$ as $j \to \infty$.

By continuity of the semi-flow, for all $t > 0$ there exists $\eta_t > 0$ such that
$\left\| \psi_t(y) - \psi_t(z) \right\| < \frac{1}{2}$ for all $y \in \Xi$ with $\| y - z \| < \eta_t$.
Thus for all $t > 0$ there exists $j \in \mathbb{Z}$ such that $t_{k_j} > t$ and $\left\| k_j y^{(k_j)} - z \right\| < \eta_t$, hence
\begin{equation}
\left\| \psi_t(z) \right\| \ge \left\| \psi_t \left( k_j y^{(k_j)} \right) \right\|
- \left\| \psi_t \left( k_j y^{(k_j)} \right) - \psi_t(z) \right\| \ge 1 - \tfrac{1}{2} = \tfrac{1}{2}.
\nonumber
\end{equation}
Thus $\psi_t(z) \not\to \bO$ as $t \to \infty$ which is a contradiction.
\end{proof}

%===============================================================================
\section{The case of three negative eigenvalues}
\label{app:threeNegativeEigs}

%...............................................................................
\begin{lemma}
Suppose the matrix
\begin{equation}
C = \begin{bmatrix} \tau_L & 1 & 0 \\ -\sigma_L & 0 & 1 \\ \delta_L & 0 & 0 \end{bmatrix}
\label{eq:C}
\end{equation}
has eigenvalues $\lambda_1 < \lambda_2 < \lambda_3 < 0$.
Then, for the system $\dot{y} = C y$,
the forward orbit $\phi(t)$ of any point in $\tilde{\Gamma}$ with $\phi_3(0) < 0$
obeys $\phi_1(t) < 0$ for all $t > 0$.
\label{le:threeNegativeEigs}
\end{lemma}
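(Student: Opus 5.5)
The plan is to reduce the claim to a scalar statement about the first coordinate $\phi_1(t)$, and then show that this coordinate is a negative multiple of a positive function. Because $C$ has companion (observable) structure, $\phi_1$ satisfies a single third-order linear ODE with explicitly computable initial data.

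First, from $\dot{y} = Cy$ we have $\dot{y}_1 = \tau_L y_1 + y_2$, $\dot{y}_2 = -\sigma_L y_1 + y_3$ and $\dot{y}_3 = \delta_L y_1$. Differentiating the first equation twice and substituting the others shows that $\phi_1$ solves
\begin{equation}
\dddot{\phi}_1 - \tau_L \ddot{\phi}_1 + \sigma_L \dot{\phi}_1 - \delta_L \phi_1 = 0 .
\nonumber
\end{equation}
This is exactly the characteristic polynomial of $C$, whose roots are $\lambda_1,\lambda_2,\lambda_3$. For an initial point $(0,0,z)\in\tilde{\Gamma}$ with $z<0$, the initial data are
\begin{equation}
\phi_1(0)=0, \qquad \dot{\phi}_1(0)=\tau_L\cdot 0+0=0, \qquad \ddot{\phi}_1(0)=\tau_L\dot{\phi}_1(0)-\sigma_L\phi_1(0)+\phi_3(0)=z .
\nonumber
\end{equation}
By uniqueness, $\phi_1(t)=z\,h(t)$, where $h$ solves the same ODE with $h(0)=\dot h(0)=0$ and $\ddot h(0)=1$. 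In other words, $h$ is the impulse response with Laplace transform $1/\big((s-\lambda_1)(s-\lambda_2)(s-\lambda_3)\big)$.

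Second, I would show $h(t)>0$ for all $t>0$. Since the Laplace transform factors into three first-order factors, $h = e^{\lambda_1 t} * e^{\lambda_2 t} * e^{\lambda_3 t}$, the convolution over $[0,t]$ of three positive functions. Hence
\begin{equation}
h(t)=\int_0^t\!\!\int_0^{s} e^{\lambda_1 (t-s)}e^{\lambda_2 (s-r)}e^{\lambda_3 r}\,dr\,ds>0 \qquad (t>0).
\nonumber
\end{equation}
This can be confirmed directly by differentiating under the integral and checking that it satisfies the ODE and the initial conditions. Alternatively one can use the partial-fraction form $h(t)=\sum_i e^{\lambda_i t}/\prod_{j\ne i}(\lambda_i-\lambda_j)$, but positivity is far more transparent in the convolution form. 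Since $z<0$, it follows that $\phi_1(t)=z\,h(t)<0$ for all $t>0$, as claimed.

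The main obstacle is only to identify the right representation of $h$: the partial-fraction formula hides positivity, while the convolution formula makes it immediate. Note that the argument uses only that the eigenvalues are real (distinctness merely simplifies the bookkeeping). The negativity of the eigenvalues is not needed for the lemma itself; it is what Theorem \ref{th:main}(ii) uses to conclude that the resulting orbit converges to $\bO$.
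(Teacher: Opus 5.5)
Your proof is correct, and it takes a genuinely different route from the paper's.

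\textbf{The paper's route.} It expands $(0,0,-1)$ in the eigenvectors of $C$ and obtains the closed form $\phi_1(t)=F(t)/D$, where $F(t)=(\lambda_2-\lambda_3)\re^{\lambda_1 t}+(\lambda_3-\lambda_1)\re^{\lambda_2 t}+(\lambda_1-\lambda_2)\re^{\lambda_3 t}$ and $D=(\lambda_1-\lambda_2)(\lambda_2-\lambda_3)(\lambda_3-\lambda_1)>0$. It then fixes the sign of $F$ with a convexity argument. The strictly convex power function $u\mapsto(\lambda_3-\lambda_2)u^{(\lambda_3-\lambda_1)/(\lambda_3-\lambda_2)}$ is compared with its tangent line at $u=1$, evaluated at $u=\re^{(\lambda_2-\lambda_3)t}$.

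\textbf{Your route.} You use the companion/observer structure of $C$ to reduce to a scalar third-order ODE with initial data $(0,0,z)$. This gives $\phi_1=z\,h$, where $h$ is the impulse response of $1/\prod_i(s-\lambda_i)$. Positivity of $h$ is then immediate from writing it as an iterated convolution of positive exponentials.

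\textbf{What each buys.} In your version positivity is structural rather than computed. It needs no distinctness of the eigenvalues, so repeated real eigenvalues are handled verbatim, and it requires no sign bookkeeping. You are also right that negativity of the eigenvalues is never used; the same holds for the paper's argument.

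The paper's route is fully elementary and gives an explicit formula for $\phi_1$, but it is sign-sensitive, and the written version slips twice:
\begin{enumerate}
\item Since $D>0$ and $\phi_1$ must be negative, what is required is $F(t)<0$, not $F(t)>0$. This is also what the tangent-line inequality actually gives after multiplying through by $\re^{\lambda_3 t}$.
\item The relevant range is $0<u<1$, not $u>1$. Strict convexity covers both sides of $u=1$, so the argument survives.
\end{enumerate}
Your representation avoids these pitfalls altogether.
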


%...............................................................................
\begin{proof}
By linearity it suffices to consider the forward orbit of $(0,0,-1)$.
Since $\tau_L = \lambda_1 + \lambda_2 + \lambda_3$,
$\sigma_L = \lambda_1 \lambda_2 + \lambda_1 \lambda_3 + \lambda_2 \lambda_3$,
and $\delta_L = \lambda_1 \lambda_2 \lambda_3$, from \eqref{eq:C} we find that
\begin{align}
v^{(1)} &= \begin{bmatrix} 1 \\ -(\lambda_2 + \lambda_3) \\ \lambda_2 \lambda_3 \end{bmatrix}, &
v^{(2)} &= \begin{bmatrix} 1 \\ -(\lambda_3 + \lambda_1) \\ \lambda_3 \lambda_1 \end{bmatrix}, &
v^{(3)} &= \begin{bmatrix} 1 \\ -(\lambda_1 + \lambda_2) \\ \lambda_1 \lambda_2 \end{bmatrix},
\nonumber
\end{align}
are eigenvectors of $C$ corresponding to $\lambda_1$, $\lambda_2$, and $\lambda_3$ respectively.
Thus
\begin{equation}
\phi(t) = k_1 \re^{\lambda_1 t} v^{(1)} + k_2 \re^{\lambda_2 t} v^{(2)} + k_3 \re^{\lambda_3 t} v^{(3)},
\nonumber
\end{equation}
for some $k_1, k_2, k_3 \in \mathbb{R}$,
and by imposing the initial condition $\phi(0) = (0,0,-1)$ we obtain
\begin{align}
k_1 &= \frac{\lambda_2 - \lambda_3}{D}, &
k_2 &= \frac{\lambda_3 - \lambda_1}{D}, &
k_3 &= \frac{\lambda_1 - \lambda_2}{D},
\nonumber
\end{align}
where $D = (\lambda_1 - \lambda_2)(\lambda_2 - \lambda_3)(\lambda_3 - \lambda_1) > 0$.
Thus $\phi_1(t) = \frac{F(t)}{D}$, where
\begin{equation}
F(t) = (\lambda_2 - \lambda_3) \re^{\lambda_1 t}
+ (\lambda_3 - \lambda_1) \re^{\lambda_2 t}
+ (\lambda_1 - \lambda_2) \re^{\lambda_3 t},
\nonumber
\end{equation}
and it remains to show $F(t) > 0$ for all $t > 0$.

Consider the function
\begin{equation}
h(u) = (\lambda_3 - \lambda_2) u^{\frac{\lambda_3 - \lambda_1}{\lambda_3 - \lambda_2}}.
\nonumber
\end{equation}
Observe $h(1) = \lambda_3 - \lambda_2$
and $\frac{d h}{d u}(1) = \lambda_3 - \lambda_1$,
thus the tangent line of $h$ at $u=1$ is
\begin{equation}
h_{\rm tang}(u) = \lambda_3 - \lambda_2 + (\lambda_3 - \lambda_1)(u-1).
\nonumber
\end{equation}
Since the exponent $\frac{\lambda_3 - \lambda_1}{\lambda_3 - \lambda_2}$ is greater than $1$,
$h$ is concave up, thus $h(u) > h_{\rm tang}(u)$ for all $u > 1$.
Thus
\begin{equation}
h \left( \re^{(\lambda_2 - \lambda_3) t} \right) > h_{\rm tang} \left( \re^{(\lambda_2 - \lambda_3) t} \right),
\nonumber
\end{equation}
for all $t > 0$, and this is algebraically equivalent to $F(t) > 0$.
\end{proof}

{\footnotesize
\bibliographystyle{unsrt}
\bibliography{Stab3dBEarXivBib}

@article{JoRa99,
	author = {Johansson, K.H. and Rantzer, A. and {\AA}str\"{o}m, K.J.},
	title = {Fast switches in relay feedback systems.},
	journal = {Automatica},
	volume = 35,
	pages = {539-552},
	year = 1999,
}

@article{TaLi12,
	author = {Tang, S. and Liang, J. and Xiao, Y. and Cheke, R.A.},
	title = {Sliding Bifurcations of {F}ilippov Two Stage Pest Control
		Models with Economic Thresholds.},
	journal = {SIAM J. Appl. Math.},
	volume = 72,
	number = 4,
	pages = {1061-1080},
	year = 2012,
}

@article{CaGi06,
	author = {Casini, P. and Giannini, O. and Vestroni, F.},
	title = {Experimental evidence of non-standard bifurcations in
		non-smooth oscillator dynamics.},
	journal = {Nonlinear Dyn.},
	volume = 46,
	number = 3,
	pages = {259-272},
	year = 2006,
}

@article{DiKo03,
	author = {di Bernardo, M. and Kowalczyk, P. and Nordmark, A.},
	title = {Sliding Bifurcations: {A} Novel Mechanism for the
		Sudden Onset of Chaos in Dry Friction Oscillators.},
	journal = {Int. J. Bifurcation Chaos},
	volume = 13,
	number = 10,
 	pages = {2935-2948},
	year = 2003,
}

@article{Sh86,
	author = {Shaw, S.W.},
	title = {On the dynamic response of a system with dry friction.},
	journal = {J. Sound. Vib.},
	volume = 108,
	number = 2,
	pages = {305-325},
	year = 1986,
}

@book{Fi88,
	author = {Filippov, A.F.},
	title = {Differential Equations with Discontinuous Righthand Sides.},
	publisher = {Kluwer Academic Publishers.},
	address = {Norwell},
	year = 1988,
}

@book{Me07,
	author = {Meiss, J.D.},
	title = {Differential Dynamical Systems.},
	publisher = {SIAM},
	address = {Philadelphia},
	year = 2007,
}

@book{Jo03,
	author = {Johansson, M.},
	title = {Piecewise Linear Control Systems.},
	series = {Lecture Notes in Control and Information Sciences.},
	volume = 284,
	publisher = {Springer-Verlag},
	address = {New York},
	year = 2003,
}

@book{Li03,
	author = {Liberzon, D.},
	title = {Switching in Systems and Control.},
	publisher = {Birkhauser},
	address = {Boston},
	year = 2003,
}

@article{LiAn09,
	author = {Lin, H. and Antsaklis, P.J.},
	title = {Stability and Stabilization of Switched Linear
		Systems: {A} Survey of Recent Results.},
	journal = {IEEE. Trans. Auto. Contr.},
	volume = 54,
	number = 2,
	pages = {308-322},
	year = 2009,
}

@inproceedings{DeRo14,
	author = {Dezuo, T. and Rodrigues, L. and Trofino, A.},
	title = {Stability Analysis of Piecewise Affine Systems with Sliding Modes.},
	booktitle = {Proceedings of the 2014 American Control Conference (ACC).},
	pages = {2005-2010},
	year = 2014,
}

@article{IeTr20,
	author = {Iervolino, R. and Trenn, S. and Vasca, F.},
	title = {Asymptotic stability of piecewise affine systems with {F}ilippov solutions
		with discontinuous piecewise {L}yapunov functions.},
	journal = {IEEE Trans. Automat. Contr.},
	volume = 66,
	number = 4,
	pages = {1513-1528},
	year = 2020,
}

@article{Su10,
	author = {Sun, Z.},
	title = {Stability of piecewise linear systems revisited.},
	journal = {Ann. Rev. Contr.},
	volume = 34,
	pages = {221-231},
	year = 2010,
}

@article{AkAr09,
	author = {Akhmet, M.U. and Aru\u{g}aslan, D.},
	title = {Bifurcation of a non-smooth planar limit cycle from a
		vertex.},
	journal = {Nonlin. Anal.},
	volume = 71,
	pages = {e2723-e2733},
	year = 2009,
}

@article{IwHa06,
	author = {Iwatani, Y. and Hara, S.},
	title = {Stability tests and stabilization for piecewise
		linear systems based on poles and zeros of subsystems.},
	journal = {Automatica},
	volume = 42,
	pages = {1685-1695},
	year = 2006,
}

@inproceedings{XuHu10,
	author = {Xu, J. and Huang, X. and Wang, S.},
	title = {Stability analysis of planar continuous piecewise linear systems.},
	booktitle = {Proceedings of the 2010 American Control Conference.},
	publisher = {IEEE},
	pages = {2505-2510},
	year = 2010,
}

@article{VaLe04,
	author = {van de Wouw, N. and Leine, R.I.},
	title = {Attractivity of Equilibrium Sets of Systems with Dry Friction.},
	journal = {Nonlin. Dyn.},
	volume = 35,
	pages = {19-39},
	year = 2004,
}

@book{DiBu08,
	author = {di Bernardo, M. and Budd, C.J. and Champneys, A.R.
		and Kowalczyk, P.},
	title = {Piecewise-smooth Dynamical Systems. Theory and Applications.},
	publisher = {Springer-Verlag},
	address = {New York},
	year = 2008,
}

@article{Si25e,
	author = {Simpson, D.J.W.},
	title = {Nonsmooth Folds as Tipping Points.},
	journal = {Chaos},
	volume = 35,
	number = 2,
	pages = {023125},
	year = 2025,
}

@article{DiNo08,
	author = {di Bernardo, M. and Nordmark, A. and Olivar, G.},
	title = {Discontinuity-induced bifurcations of equilibria in
		piecewise-smooth and impacting dynamical systems.},
	journal = {Phys. D},
	volume = 237,
	pages = {119-136},
	year = 2008,
}

@article{HoHo16,
	author = {Hogan, S.J. and Homer, M.E. and Jeffrey, M.R. and
		Szalai, R.},
	title = {Piecewise smooth dynamical systems theory:
		the case of the missing boundary equilibrium bifurcations.},
	journal = {J. Nonlin. Sci.},
	volume = 26,
	pages = {1161-1173},
	year = 2016,
}

@article{Si18d,
	author = {Simpson, D.J.W.},
	title = {A general framework for boundary equilibrium bifurcations of {F}ilippov systems.},
	journal = {Chaos},
	volume = 28,
	number = 10,
	pages = {103114},
	year = 2018,
}

@article{Si21,
	author = {Simpson, D.J.W.},
	title = {On the stability of boundary equilibria in {F}ilippov systems.},
	journal = {Commun. Pure Appl. Anal.},
	volume = 20,
	number = 9,
	pages = {3093-3111},
	year = 2021,
}

@article{ElOn15,
	author = {Eldem, V. and \"{O}ner, I.},
	title = {A note on the stability of bimodal systems in $\mathbb{R}^3$ with
		discontinuous vector fields.},
	journal = {Int. J. Contr.},
	volume = 88,
	number = 4,
	pages = {729-744},
	year = 2015,
}

@article{GoMe03,
	author = {Gon\c{c}alves, J.M. and Megretski, A. and Dahleh, M.A.},
	title = {Global Analysis of Piecewise Linear Systems Using Impact Maps
		and Surface {L}yapunov Functions.},
	journal = {IEEE Trans. Automat. Contr.},
	volume = 48,
	number = 12,
	pages = {2089-2106},
	year = 2003,
}

@book{So98,
	author = {Sontag, E.D.},
	title = {Mathematical Control Theory.},
	publisher = {Springer-Verlag},
	address = {New York},
	year = 1998,
}

@book{Je18b,
	author = {Jeffrey, M.R.},
	title = {Hidden Dynamics. The Mathematics of Switches, Decisions and Other
		Discontinuous Behaviour.},
	publisher = {Springer},
	address = {New York},
	year = 2018,
}

@book{Hi76,
	author = {Hirsch, M.W.},
	title = {Differential Topology.},
	publisher = {Springer-Verlag},
	address = {New York},
	year = 1976,
}

@article{LaSt71,
	author = {Lasota, A. and Strauss, A.},
	title = {Asymptotic Behavior for Differential Equations Which
		Cannot be Locally Linearized.},
	journal = {J. Diff. Eq.},
	volume = 10,
	pages = {152-172},
	year = 1971,
}

@book{KaHa95,
	author = {Katok, A. and Hasselblatt, B.},
	title = {Introduction to the Modern Theory of Dynamical Systems.},
	publisher = {Cambridge University Press},
	address = {New York},
	year = 1995,
}
}

\end{document}